\documentclass[reqno, 12pt]{amsart}
\usepackage{mathrsfs}
\usepackage{amscd}
\usepackage{amsmath}
\usepackage{latexsym}
\usepackage{amsfonts}
\usepackage{amssymb}
\usepackage{amsthm}
\usepackage{graphicx}
\usepackage{hyperref}
\usepackage{makecell}
\usepackage{array}
\usepackage{booktabs}   
\usepackage{multirow}
\usepackage{color,xcolor}
\usepackage[ruled,vlined]{algorithm2e}
\usepackage{graphicx}
\usepackage[section]{placeins}

\usepackage{tikz}
\usetikzlibrary{arrows.meta}

\newtheorem{theorem}{Theorem}[section]

\newtheorem{corollary}[theorem]{Corollary}
\newtheorem{lemma}[theorem]{Lemma}

\newtheorem{remark}[theorem]{Remark}

\definecolor{rot}{rgb}{1.000,0.000,0.000}
\definecolor{blue}{rgb}{0.000,0.000,1.000}

\numberwithin{equation}{section}

\title[Inverse moving point-like scatter problem]{Detection of a moving point-like scatter by using moving receivers and emitter}

\author[Minghui Li]{Minghui Li}
\author[Guanghui Hu]{Guanghui Hu}
\author[Hongyu Liu]{Hongyu Liu}
\address[Minghui Li]{School of Mathematical Sciences, Nankai University, Tianjin 300071, China}
\email{liminghui1122@outlook.com}

\address[Guanghui Hu]{School of Mathematical Sciences and LPMC, Nankai University, Tianjin 300071, China}
\email{ghhu@nankai.edu.cn}
\thanks{The work of G. Hu is supported by the National Natural Science Foundation of China (No. 12425112), the Fundamental Research Funds for Central Universities in China (No. 050-63263073) and the Natural Science Foundation of Tianjin (No. 25JCZDJC00970).}

\address[Hongyu Liu]{Department of Mathematics, City University of Hong Kong, Kowloon, Hong Kong}
\email{hongyliu@cityu.edu.hk}

\subjclass[2020]{35R30, 78A46, 35L05}
\keywords{inverse scattering, moving point-like scatterer, trajectory reconstruction algorithm}

\begin{document}
\begin{abstract}
	We consider an inverse scattering problem for the scalar wave equation in which the point emitter, a point-like scatterer, and multiple receivers are all moving. The goal is to reconstruct the trajectory of a moving point-like scatterer from time-dependent measurements of the scattered field. To this end, we establish a rigorous point-interaction model for the moving scatterer in the time domain, proving well-posedness of the forward scattering problem, including existence, uniqueness and continuous dependence of the scattered field on the scatterer trajectory and scattering parameter. By exploiting the retarded-time structure, we introduce distance functions that connect emission, scattering, and observation processes, and show that they satisfy a coupled system of nonlinear ordinary differential equations determined by the measurement data. Based on this formulation, we propose a reconstruction algorithm that combines initial localization with the solution of the derived ODE system. Numerical experiments demonstrate that the method is accurate and robust with respect to noise, with reconstruction errors remaining stable under moderate perturbations.
\end{abstract}

\maketitle

\section{Introduction}

Wave propagation modeled by the scalar wave equation underpins numerous applications, including radar, sonar, medical imaging, and seismic exploration. Inverse scattering problems, which aim to recover unknown sources or scatterers from measured wave fields, have been extensively studied; the classical theory is summarized in the monograph \cite{ColtonKress}. In many realistic scenarios, the emitter, the scatterer, and the receivers may all be in motion, leading to significantly more challenging inverse problems. The use of moving receivers and emitters is particularly advantageous in dynamic environments, where fixed devices are impractical or insufficient: they offer enhanced coverage, adaptability to target motion, and the ability to collect diverse data from varying perspectives, which can improve reconstruction quality.

The identification of a moving point emitter from boundary or exterior measurements has received considerable attention. Early uniqueness results for recovering point emitters from time-dependent data were established by El Badia and Ha-Duong \cite{ElBadiaHaDuong}. Nakaguchi, Inui, and Ohnaka \cite{Nakaguchi2012} developed an algebraic reconstruction method using only a single observation point. More recently, Wang and Hu \cite{WangHu2026} proposed a frequency-domain approach to recover the orbit of a moving point emitter from multi-directional data, while Guo, Hu, and Ma \cite{Guo2023} considered multifrequency data measured at sparse observation directions. For the case of sparse partial measurements, Liu, Guo, and Sun \cite{Liu2021} introduced a deterministic-statistical approach. Al Jebawy, Elbadia, and Triki \cite{AlJebawy2022} investigated the inverse moving point source problem for the wave equation and derived Lipschitz stability estimates. Wang, Karamehmedovi$\acute{c}$ and Triki \cite{Wang2023} further developed a Bayesian framework that combines uniqueness, stability, and statistical inference for localizing moving sources. Other relevant contributions include the analysis of retarded potentials in electrodynamics and elastodynamics by Esen and \"{O}zkan \cite{EsenOzkan2013}, super-resolution time-reversal focusing by Garnier and Fink \cite{Garnier2015}, and synthetic aperture imaging and wave propagation in random media by Borcea and co-authors \cite{Borcea2012,Borcea2017,Borcea2019}. Hu, Kian, Li, and Zhao \cite{Hu2018,Hu2019,Hu2020} studied inverse source problems in electrodynamics, including moving sources, and established uniqueness results in unbounded domains. Wang, Guo, Li, and Liu \cite{Wang2017} proposed a mathematical design of a novel input device using a moving acoustic emitter.

When the moving object is a scatterer rather than a source, the problem becomes nonlinear and significantly more difficult. Klibanov, Li, and Zhang \cite{Klibanov2020a,Klibanov2020b} developed a convexification method based on Carleman estimates for a three-dimensional inverse scattering problem with a moving point emitter, proving Lipschitz stability and validating the method with experimental data. Sun, Chen, Gao, Li, and Sun \cite{Sun2025} considered inverse obstacle scattering with a single moving emitter and applied a direct sampling method to reconstruct both point-like and extended scatterers. These studies, however, assume that either the emitter or the scatterer moves, while the other components remain stationary.

A rigorous modeling of point-like scatterers can be achieved using the theory of point interactions arising from quantum mechancis \cite{Albeverio2005}, which is equivalent with the Foldy approach \cite{Foldy1945}.
Stationary point scatterers in the presence of an extended obstacle have been studied via self-adjoint extensions of the Dirichlet Laplacian; see, e.g., the work of Hu, Mantile, and Sini \cite{HuMantileSini2014}, which establishes a Krein-type resolvent formula and a factorization method for the inverse problem. In the time domain, wave equations with point interactions have been analyzed by Noja and Posilicano \cite{NojaPosilicano1998,NojaPosilicano2005} and Kurasov and Posilicano \cite{KurasovPosilicano2005}, who derived a general impedance-type boundary condition for the point-interaction model  and finite-speed propagation properties. In the Laplace domain, Mantile and Posilicano \cite{MantilePosilicano2020} further developed a factorization method for inverse wave scattering, providing an alternative framework that also accommodates point-interaction models for the wave equation.

In this paper, we study an inverse scattering problem for the scalar wave equation where the point emitter, a point-like scatterer, and multiple receivers are all moving along prescribed trajectories. To the best of our knowledge, this fully dynamic configuration has not been rigorously addressed before. Our approach is inspired by the distance-function methodology introduced in \cite[Section 3]{Hu2020} and further developed for stability and numerical implementation in \cite{AlJebawy2022,LiHuZhao2025}; these works focus on moving point sources (linear problems). In contrast, our problem involves a moving point scatterer, which is inherently nonlinear. We extend the point-interaction model from the stationary setting \cite{HuMantileSini2014} to moving scatterers by adapting the time-domain formulations of \cite{NojaPosilicano1998,NojaPosilicano2005,KurasovPosilicano2005}. Specifically, we rigorously prove the well-posedness of the forward scattering problem, including existence, uniqueness, and continuous dependence of the scattered field on the scatterer trajectory and scattering parameter. Based on this model, we introduce a system of nonlinear ordinary differential equations satisfied by the distance functions between the moving scatterer and the four receivers. These distance functions can be computed directly from the measured signals, involving both the displacement  and speed of the wave fields. 
The scatterer trajectory is then recovered by solving a linear system using four non-coplanar receivers.  Numerical experiments demonstrate the accuracy and robustness of the proposed method under various noise levels and dynamic configurations.

The remainder of the paper is organized as follows. In Section \ref{PF} we describe preliminary assumptions on the moving point scatterer, the moving emitter and receivers, and formulate the inverse problem.
In Section~\ref{sec:forward}, we rigorously establish the forward model for the moving point scatterer and prove well-posedness via the point-interaction approach. Section~\ref{nm} presents the reconstruction algorithm, which derives the ODE system for the distance functions, initializes the trajectory using arrival-time information, and solves the inverse problem. Section~5 provides numerical examples that validate the method and illustrate its performance under noise. Finally, Section~6 concludes the paper with a discussion of possible extensions and future work.

\section{Problem formulation}\label{PF}
In this paper we consider a three-dimensional acoustic scattering problem from a moving point-like target, where both the emiter and receivers are also allowed to move along prescribed trajectories. Specifically, a moving point emiter with strength \(\varphi(t)\) travels along \(\mathbf{z}(t)\), generating an incident field \(u^{\mathrm{in}}\). A moving point-like scatterer follows \(\mathbf{y}(t)\) and produces a scattered field \(u^{\mathrm{sc}}\) that is recorded by four receivers moving along trajectories \(\mathbf{x}_i(t)\), \(i=1,2,3,4\). The four receivers satisfy a non-coplanarity condition (made precise below). All speeds are bounded by the wave speed \(c\) . A schematic illustration of this configuration is shown in Figure~\ref{fig:scattering_setup}.

\begin{figure}[!ht]
    \centering
\begin{tikzpicture}[scale=1.2, >=Stealth]
\tikzset{>={Stealth[length=6pt,width=6pt]}}

\draw[thick] (0,0) circle (2);
\node at (-2.3,0) {$\Gamma_2$};

\draw[thick, dashed] (0,0) circle (1.8);
\node at (-1.4,0) {$\Gamma_1$};

\draw[->, thick, blue] 
    (-0.8,-0.8) .. controls (-0.5,0.5) .. (0.3,0.2)
    .. controls (0.8,0.5) .. (0.6,1.0);
\filldraw[blue] (0.3,0.2) circle (2pt);
\node[blue] at (0,1.0) {$\mathbf{y}(t)$};
\node[blue] at (-0.2,-1.3) {\small Scatterer};

\draw[->, thick, red] 
    (-4,2) .. controls (-3,3) .. (-2,2.5)
    .. controls (-1.5,2.2) .. (-0.5,2.8);

\filldraw[red] (-2,2.5) circle (2pt);
\node[red] at (-1.4,2.8) {$\mathbf{z}(t)$};
\node[red] at (-3.2,3.2) {\small emiter};

\draw[->, thick, green!60!black] 
    (3.5,-1.5) .. controls (3,-0.5) .. (2.5,0.5);
\filldraw[green!60!black] (3,-0.5) circle (2pt);
\node[green!60!black] at (2.5,-1.4) {$\mathbf{x}_1(t)$};

\draw[->, thick, green!60!black] 
    (2.2,1.2) .. controls (3,1.8) .. (3.8,1.5);
\filldraw[green!60!black] (2.9,1.65) circle (2pt);
\node[green!60!black] at (3.5,1.9) {$\mathbf{x}_2(t)$};
\node[green!60!black] at (3.5,2.5) {\small Receivers};

\draw[->, dashed, red] (-2,2.5) -- (0.3,0.2);
\node[red] at (-1,2) {\small $u^{\mathrm{in}}$};

\draw[->, dashed, blue] (0.3,0.2) -- (3,-0.5);
\draw[->, dashed, blue] (0.3,0.2) -- (2.9,1.65);
\node[blue] at (1.3,0.4) {\small $u^{\mathrm{sc}}$};

\draw[->, thick, green!60!black] 
    (-3.5,-1.8) .. controls (-2.8,-1.0) .. (-2.2,-0.6);
\filldraw[green!60!black] (-2.8,-1.0) circle (2pt);
\node[green!60!black] at (-3.6,-2.2) {$\mathbf{x}_3(t)$};

\draw[->, thick, green!60!black] 
    (0,-4) .. controls (0.8,-3.2) .. (1.5,-2.6);
\filldraw[green!60!black] (0.8,-3.2) circle (2pt);
\node[green!60!black] at (0,-4.4) {$\mathbf{x}_4(t)$};

\draw[->, dashed, blue] (0.3,0.2) -- (-2.8,-1.0);
\draw[->, dashed, blue] (0.3,0.2) -- (0.8,-3.2);

\end{tikzpicture}
\caption{Wave scattering by a moving scatterer with moving emitter and receivers}
    \label{fig:scattering_setup}
\end{figure}

We impose the following assumptions throughout the paper:
\begin{enumerate}
	\item The emitter strength \(\varphi(t) > 0\) for all \(t\ge 0\).
	\item The four receiver trajectories are such that for any positive times \(t_1,t_2,t_3,t_4>0\), the points \(\mathbf{x}_1(t_1),\mathbf{x}_2(t_2),\mathbf{x}_3(t_3),\mathbf{x}_4(t_4)\) are not coplanar.
	\item (Subsonic condition) The trajectories \(\mathbf{x}_1,\mathbf{x}_2,\mathbf{x}_3,\mathbf{x}_4,\mathbf{z},\mathbf{y}\) are all \(C^1\)-smooth functions and their speeds are all bounded by the wave speed $c$, that is, there exists \(c_0\in(0,c)\) with
	\[
	|\dot{\mathbf{z}}(t)| < c_0,\quad |\dot{\mathbf{y}}(t)| < c_0,\quad |\dot{\mathbf{x}}_i(t)| < c_0,\qquad \text{for all}\, t>0,\ i=1,2,3,4.
	\]
	\item (Separability condition) There exist two smooth bounded domains \(\Omega_1\subset\Omega_2\) such that
	\[
	\mathbf{y}(t)\in\Omega_1,\quad \mathbf{z}(t),\mathbf{x}_i(t)\in\mathbb{R}^3\setminus\overline{\Omega_2}\quad \text{for all}\;t\ge0,\ i=1,2,3,4.
	\] Moreover, the trajectories for the emiter and four receivers do not intersect with each other over $(0, T)$ for some $T>0$ sufficiently large.
\end{enumerate}

A mathematical model of the forward scattering problem,  in the presence of a moving point scatterer, will be established in Section \ref{sec:forward}. We will show uniqueness, existence and stability of the model.
In what follows we first describe explicit expressions of the incident and scattered fields, and then formulate the inverse problem.

\subsection{Incident field from a moving point emitter}\label{incident}
Suppose that the background medium is homogeneous and isotropic with the constant acoustic speed $c>0$.
The incident field \(u^{\mathrm{in}}\) generated by 
a moving point emitter then 
satisfies the inhomogeneous wave equation
\[
\begin{cases}
	(\frac{1}{c^2}\partial_{tt}-\Delta)u^{\mathrm{in}}(\mathbf{x},t)=\varphi(t)\,\delta(\mathbf{x}-\mathbf{z}(t)), & \mathbf{x}\in\mathbb{R}^3,\ t\ge 0,\\[4pt]
	u^{\mathrm{in}}(\mathbf{x},0)=\partial_t u^{\mathrm{in}}(\mathbf{x},0)=0, & \mathbf{x}\in\mathbb{R}^3.
\end{cases}
\]
The solution is given explicitly by the convolution between the free-space fundamental solution and the source term, which takes the
retarded-time formula
\begin{equation}\label{in}
u^{\mathrm{in}}(\mathbf{x},t) = \frac{H\bigl(t-|\mathbf{x}-\mathbf{z}(0)|/c\bigr)}{4\pi\,|\mathbf{x}-\mathbf{z}(\tau_1(\mathbf{x},t))|}\,\varphi\bigl(\tau_1(\mathbf{x},t)\bigr),\qquad \mathbf{x}\neq\mathbf{z}(t),
\end{equation}
where \(H\) is the Heaviside step function, and \(\tau_1(\mathbf{x},t)>0\) solves \(t - \tau_1 - |\mathbf{x} - \mathbf{z}(\tau_1)|/c = 0\). The subsonic assumption stated above ensures the existence and uniqueness of \(\tau_1\). In the authors' earlier work \cite{LiHuZhao2025}, this retarded-time representation was derived as the moving point source solution of the Maxwell's equations.

\subsection{Scattered field and recorded signals}\label{sec1.2}
Using the point-interaction model shown in Section~ \ref{sec:forward} (Theorem~\ref{thm:existence}), we can decompose the total field into \(u = u^{\mathrm{in}} + u^{\mathrm{sc}}\), where the scattered field \(u^{\mathrm{sc}}\) produced by the moving point-like scatterer satisfies
\[
\begin{cases}(\frac{1}{c^2}\partial_{tt}-\Delta)u^{\mathrm{sc}}(\mathbf{x},t)=q(t)\,\delta(\mathbf{x}-\mathbf{y}(t)), & \mathbf{x}\in\mathbb{R}^3,\ t\ge 0,\\[4pt]u^{\mathrm{sc}}(\mathbf{x},0)=\partial_t u^{\mathrm{sc}}(\mathbf{x},0)=0, & \mathbf{x}\in\mathbb{R}^3,\end{cases}
\]
with the intensity function \(q(t)\) solving
\[
\dot{q}(t) + 4\pi c\alpha\, q(t) = 4\pi c\, u^{\mathrm{in}}(\mathbf{y}(t),t),\quad q(0)=0,
\]
and \(\alpha>0\) being the scattering parameter for the emitter. Like the expression of the incident field \eqref{in},  the scattered field can also be written in the retarded form
\begin{equation}\label{sc}
	u^{\mathrm{sc}}(\mathbf{x},t) = \frac{H\bigl(t-|\mathbf{x}-\mathbf{y}(0)|/c\bigr)}{4\pi|\mathbf{x}-\mathbf{y}(\tau_2(\mathbf{x},t))|}\, q\bigl(\tau_2(\mathbf{x},t)\bigr),\qquad \mathbf{x}\neq\mathbf{y}(t),
\end{equation}
where \(\tau_2(\mathbf{x},t)>0\) solves \(t-\tau_2-|\mathbf{x}-\mathbf{y}(\tau_2)|/c=0\). 
Inserting the following expression of $q(t)$ into \eqref{sc},
\begin{equation}\label{scc}
q(t)=\int_0^{t} e^{-4\pi c\alpha(t-s)}\, u^{\mathrm{in}}(\mathbf{y}(s),s)\, ds,\quad t>0,
\end{equation}
we arrive at
\[
u^{\mathrm{sc}}(\mathbf{x},t) = \frac{H\bigl(t-|\mathbf{x}-\mathbf{y}(0)|/c\bigr)}{|\mathbf{x}-\mathbf{y}(\tau_2(\mathbf{x},t))|}
\int_0^{\tau_2(\mathbf{x},t)} e^{-4\pi c\alpha[\tau_2(\mathbf{x},t)-s]}\, u^{\mathrm{in}}(\mathbf{y}(s),s)\, ds,\qquad \mathbf{x}\neq\mathbf{y}(t).
\]
For the inverse problem, we denote the scattered fields measured at
the receivers moving along trajectories \(\mathbf{x}_i(t)\), \(i=1,2,3,4\) by
\[
U_i(t) := u^{\mathrm{sc}}(\mathbf{x}_i(t),t).
\]
In this work we study the following inverse problem, which involves both the displacement $U_i(t)$  and speed $U_i'(t)$ of the wave fields.

\noindent
\textbf{Inverse problem.}
Given the scattered field measurements \(\{U_i(t), U_i'(t) \mid t\ge 0,\ i=1,2,3,4\}\), the emiter strength \(\varphi(t)\) and its trajectory \(\mathbf{z}(t)\), as well as the receiver trajectories \(\mathbf{x}_i(t)\), we want to recover the unknown trajectory \(\mathbf{y}(t)\) of the moving point-like scatterer.

\section{A point-interaction model for moving scatterers: well-posedness results}
\label{sec:forward}

In this section we rigorously formulate the forward problem for the total wave field when a moving point scatterer is present, and prove the existence and uniqueness of the solution. The results obtained here justify the decomposition into incident and scattered fields used in the previous section. 
\subsection{Uniqueness and existence of the forward model}

We begin with the limit of $\tau_2(\mathbf{x},t)$ as $\mathbf{x} \rightarrow \mathbf{y}(t)$, where \(\tau_2(\mathbf{x},t)\) is defined implicitly by
\begin{equation}\label{tau2}
t-\tau_2-\frac{|\mathbf{x}-\mathbf{y}(\tau_2)|}{c}=0,\qquad \tau_2>0.
\end{equation}

\begin{lemma}\label{rem:tau2_limit} Let $t>0$ be fixed. As \(\mathbf{x}\to\mathbf{y}(t)\), the retarded time \(\tau_2(\mathbf{x},t)\) converges to \(t\) and consequently  \(\mathbf{y}(\tau_2)=\mathbf{y}(\tau_2(\mathbf{x},t))\to\mathbf{y}(t)\).
	\end{lemma}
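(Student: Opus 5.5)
The plan is to compare $\tau_2(\mathbf{x},t)$ with $t$ through the defining relation \eqref{tau2}, using the subsonic bound $|\dot{\mathbf{y}}|<c_0<c$. I take $\tau_2(\mathbf{x},t)$ to be the unique root from the subsonic assumption. For $\mathbf{x}$ close to $\mathbf{y}(t)$ (in particular $|\mathbf{x}-\mathbf{y}(0)|<ct$), this root lies in $(0,t]$, since $g(\tau):=t-\tau-|\mathbf{x}-\mathbf{y}(\tau)|/c$ satisfies $g(0)>0$ and $g(t)\le 0$, and $g$ is strictly decreasing because $\tau\mapsto|\mathbf{x}-\mathbf{y}(\tau)|$ is Lipschitz with constant $c_0<c$. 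Write $\tau_2=\tau_2(\mathbf{x},t)$. By \eqref{tau2},
\[
c\,(t-\tau_2)=|\mathbf{x}-\mathbf{y}(\tau_2)|.
\]

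The key step is a triangle inequality through $\mathbf{y}(t)$:
\[
|\mathbf{x}-\mathbf{y}(\tau_2)|\le |\mathbf{x}-\mathbf{y}(t)|+|\mathbf{y}(t)-\mathbf{y}(\tau_2)|\le |\mathbf{x}-\mathbf{y}(t)|+c_0\,(t-\tau_2).
\]
The second bound comes from the mean value inequality for the $C^1$ trajectory $\mathbf{y}$ on $[\tau_2,t]$. Combining the two displays gives
\[
(c-c_0)(t-\tau_2)\le |\mathbf{x}-\mathbf{y}(t)|,
\]
and hence
\[
0\le t-\tau_2(\mathbf{x},t)\le \frac{|\mathbf{x}-\mathbf{y}(t)|}{c-c_0}\longrightarrow 0 \quad\text{as } \mathbf{x}\to\mathbf{y}(t).
\]
This proves $\tau_2(\mathbf{x},t)\to t$.

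The second claim then follows from continuity of $\mathbf{y}$. Quantitatively,
\[
|\mathbf{y}(\tau_2)-\mathbf{y}(t)|\le c_0(t-\tau_2)\le \frac{c_0}{c-c_0}\,|\mathbf{x}-\mathbf{y}(t)|.
\]

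The only delicate point is the first one: ensuring that $\tau_2$ is the root lying in $(0,t]$, so that the sign $t-\tau_2\ge0$ holds and the Lipschitz bound applies on $[\tau_2,t]$. This is handled by the monotonicity of $g$ described above. Once $\tau_2\in(0,t]$ is secured, the rest is the elementary estimate, which also yields an explicit Lipschitz rate.
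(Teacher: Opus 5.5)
Your argument is correct, but it is organized differently from the paper's. The paper argues by compactness. Since $0\le\tau_2\le t$, it extracts a convergent subsequence $\tau_2(\mathbf{x}_{n_k},t)\to\tau^*$ and passes to the limit in \eqref{tau2} to get $c(t-\tau^*)=|\mathbf{y}(t)-\mathbf{y}(\tau^*)|\le c_0(t-\tau^*)$. It then concludes $\tau^*=t$ and upgrades from subsequences to the whole family.

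You instead insert $\mathbf{y}(t)$ by the triangle inequality before any limit is taken and absorb $c_0(t-\tau_2)$ into the left-hand side. This yields $0\le t-\tau_2\le|\mathbf{x}-\mathbf{y}(t)|/(c-c_0)$ directly.

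Both proofs rest on the same bound $|\mathbf{y}(t)-\mathbf{y}(s)|\le c_0|t-s|$ with $c_0<c$, but yours is more elementary and quantitative. It gives a linear rate. Combined with the reverse estimate $|\mathbf{x}-\mathbf{y}(t)|\le(c+c_0)(t-\tau_2)$, it also shows that $|\mathbf{x}-\mathbf{y}(\tau_2)|=c(t-\tau_2)$ and $|\mathbf{x}-\mathbf{y}(t)|$ are comparable. That comparability is what makes the comparison of singular parts after \eqref{asy} precise. Two-sided bounds are the right notion there: for $\dot{\mathbf{y}}(t)\neq0$ the ratio depends on the direction of approach and does not tend to $1$.

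The compactness route gives no rate. However, it invokes the subsonic bound only at the limit point $\tau^*$, so it would survive under a merely pointwise condition $|\mathbf{y}(t)-\mathbf{y}(s)|<c|t-s|$.

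Your check that a root exists in $(0,t]$ for $\mathbf{x}$ near $\mathbf{y}(t)$ (via $|\mathbf{y}(t)-\mathbf{y}(0)|\le c_0t<ct$) supplies something the paper takes for granted. The inequality $\tau_2\le t$, by contrast, is automatic for any root of \eqref{tau2}, since $t-\tau_2=|\mathbf{x}-\mathbf{y}(\tau_2)|/c\ge0$. So it is not really the delicate point.
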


\begin{proof} 
We recall from \eqref{tau2} that \(\tau_2\le t\).
	Let \(\{\mathbf{x}_n\}\) be any sequence with  \(\mathbf{x}_n\to\mathbf{y}(t)\) and set \(\tau_2^{(n)}:=\tau_2(\mathbf{x}_n,t)\).	Since \(0\le\tau_n\le t\), the sequence \(\{\tau_2^{(n)} \}\) is bounded.
	By the Bolzano--Weierstrass theorem, there exists a convergent subsequence \(\tau_2^{(n_k)}\to\tau^*\) for some \(\tau^*\in[0,t]\).
	Passing to the limit in the retarded-time equation and using the continuity of \(\mathbf{y}\), we obtain
	\[
	t-\tau^*-\frac{|\mathbf{y}(t)-\mathbf{y}(\tau^*)|}{c}=0.
	\]
	By the subsonic condition \(|\dot{\mathbf{y}}(s)|\le c_0<c\) for all \(s\ge0\) and the fundamental theorem of calculus,
	\[
	|\mathbf{y}(t)-\mathbf{y}(\tau^*)|=\Bigl|\int_{\tau^*}^t\dot{\mathbf{y}}(s)\,ds\Bigr|
	\le\int_{\tau^*}^t|\dot{\mathbf{y}}(s)|\,ds\le c_0|t-\tau^*|.
	\]
	Combining the above yields
	\[
	t-\tau^*=\frac{|\mathbf{y}(t)-\mathbf{y}(\tau^*)|}{c}\le\frac{c_0}{c}(t-\tau^*).
	\]
	If \(t-\tau^*>0\), dividing both sides by \(t-\tau^*\) gives \(1\le c_0/c<1\), a contradiction.
	Hence \(t-\tau^*=0\), i.e., \(\tau^*=t\).
	Since every convergent subsequence of \(\{\tau_2^{(n)} \}\) converges to the same limit \(t\), the full sequence satisfies \(\tau_2^{(n)} \to t\).
	Because the sequence \(\{\mathbf{x}_n\}\) was arbitrary, we conclude that \(\tau_2(\mathbf{x},t)\to t\) as \(\mathbf{x}\to\mathbf{y}(t)\).
	The continuity of \(\mathbf{y}\) then implies \(\mathbf{y}(\tau_2(\mathbf{x},t))\to\mathbf{y}(t)\).
\end{proof}

Inspired by the point-interaction model for point-like scatterers \cite{HuMantileSini2014,Foldy1945,MantilePosilicano2020,NojaPosilicano2005}, we make the following ansatz on the total field \(u(\mathbf{x},t)\) as \(\mathbf{x}\to\mathbf{y}(t)\):
\begin{equation}\label{asy}
u(\mathbf{x},t) 
=
q(t)
\left( 
\alpha + \frac{1}{4\pi|\mathbf{x}-\mathbf{y}(\tau_2(\mathbf{x},t))|} 
\right) 
+
O\bigl(|\mathbf{x}-\mathbf{y}(t)|\bigr),
\end{equation}
where \(\alpha>0\) is a prescribed scattering parameter and \(q(t)\) denotes a scalar function that depends only on time.
By Lemma \ref{tau2},
the above expansion indicates that the total field can be approximated near the scatterer's orbit as the sum of a regular part \(\alpha q(t)\) and a singular part 
\[
 \frac{q(t)}{4\pi|\mathbf{x}-\mathbf{y}(\tau_2(\mathbf{x},t))|}\sim \frac{q(t)}{4\pi} \frac{1}{|\mathbf{x}-\mathbf{y}(t)|}, 
\]
where the coefficients are directly proportional to each other.  The constant \(\alpha\) is commonly referred to as the scattering parameter, which reflects the physical properties of the point-like scatterer.

For any \(t> 0\), define functionals parameterized by the time variable \(t\) as follows:
\begin{align*}
(\Gamma_1(t))(u) 
&:= 
\lim_{\mathbf{x}\to\mathbf{y}(t)} 
4\pi|\mathbf{x}-\mathbf{y}(\tau_2)|\, u(\mathbf{x},t),
\\
(\Gamma_2(t))(u) 
&:= 
\lim_{\mathbf{x}\to\mathbf{y}(t)} 
\left( 
	u(\mathbf{x},t) - \frac{(\Gamma_1(t))(u)}{4\pi|\mathbf{x}-\mathbf{y}(\tau_2)|} 
\right).
\end{align*}
The asymptotic expansion \eqref{asy} is equivalent to the local boundary condition of impedance-type:
\[
(\Gamma_2(t))(u) = \alpha\,(\Gamma_1(t))(u)=\alpha\,q(t).
\]

Below we consider a general source term \(f(\mathbf{x},t)\)  with a compact spatial support that does not intersect the scatterer's worldline \(\{(\mathbf{y}(t),t):t\ge0\}\). Let 
\(u^{\mathrm{in}}\) be  the solution of the inhomogeneous wave equation due to the source term  \(f\):
\begin{equation}\label{eq:inIVP}
	\begin{cases}
		(\frac{1}{c^2}\partial_{tt}-\Delta)u^{\mathrm{in}} = f, & (\mathbf{x},t)\in \mathbb{R}^3\times[0,\infty), \\[4pt]
		u^{\mathrm{in}}(\mathbf{x},0)=\partial_t u^{\mathrm{in}}(\mathbf{x},0)=0, & \mathbf{x}\in\mathbb{R}^3,
	\end{cases}
\end{equation}
In the special case that \(f(\mathbf{x},t)=q(t)\delta(\mathbf{x}-\mathbf{y}(t))\), the incident field \(u^{\mathrm{in}}\) is just the wave signals generated by the moving emitter given by subsection \ref{incident}.  
By the point-interaction model,
the total field \(u\) satisfies the following initial value problem:
\begin{equation}\label{eq:totalIVP}
\begin{cases}
\frac{1}{c^2}\partial_{tt}u - \Delta_{\mathrm{reg}} u = f, & (\mathbf{x},t)\in \mathbb{R}^3\times[0,\infty), \\[4pt]
(\Gamma_2(t))(u) = \alpha (\Gamma_1(t))(u), & t> 0, \\[4pt]
u(\mathbf{x},0)=\partial_t u(\mathbf{x},0)=0, & \mathbf{x}\in\mathbb{R}^3,
\end{cases}
\end{equation}
where the regularized Laplacian $\Delta_{\mathrm{reg}}$ is defined by
\[
\Delta_{\mathrm{reg}} u := \Delta\!\left( u - \frac{(\Gamma_1(t))(u)}{4\pi|\mathbf{x}-\mathbf{y}(t)|} \right).
\]
Note that $\Delta_{\mathrm{reg}}$ acts only on the regular part of $u$.

\begin{theorem}\label{thm:existence}
The initial value problem \eqref{eq:totalIVP} admits a unique solution.  
Moreover, the solution can be decomposed as \(u = u^{\mathrm{in}} + u^{\mathrm{sc}}\), where \(u^{\mathrm{in}}\) is the solution of \eqref{eq:inIVP}
and \(u^{\mathrm{sc}}\) is the solution of
\begin{equation}\label{eq:scIVP}
\begin{cases}
(\frac{1}{c^2}\partial_{tt}-\Delta)u^{\mathrm{sc}} =  q(t)\,\delta(\mathbf{x}-\mathbf{y}(t)), & (\mathbf{x},t)\in \mathbb{R}^3\times[0,\infty), \\[4pt]
u^{\mathrm{sc}}(\mathbf{x},0)=\partial_t u^{\mathrm{sc}}(\mathbf{x},0)=0, & \mathbf{x}\in\mathbb{R}^3,
\end{cases}
\end{equation}
with the intensity \(q(t)\) satisfying the ordinary differential equation
\[
\frac{1}{4\pi c}q'(t) +\alpha q(t) = u^{\mathrm{in}}(\mathbf{y}(t),t),\qquad q(0)=0.
\]
\end{theorem}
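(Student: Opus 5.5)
The plan is to reduce \eqref{eq:totalIVP} to a scalar Volterra-type ODE for the unknown intensity, using the explicit retarded-time structure. For existence, I will use the ansatz $u=u^{\mathrm{in}}+u^{\mathrm{sc}}$, where $u^{\mathrm{sc}}$ is given by the retarded formula \eqref{sc} for a yet unknown continuous $q$ with $q(0)=0$. Away from the worldline, $u^{\mathrm{sc}}$ solves the homogeneous wave equation. Since $u^{\mathrm{in}}$ is smooth near the worldline ($f$ is supported away from it), we get
\[
(\Gamma_1(t))(u)=\lim_{\mathbf{x}\to\mathbf{y}(t)}4\pi|\mathbf{x}-\mathbf{y}(\tau_2)|\,u^{\mathrm{sc}}(\mathbf{x},t)=q(t).
\]
This follows from Lemma~\ref{rem:tau2_limit}, which gives $\tau_2(\mathbf{x},t)\to t$ and hence $q(\tau_2)\to q(t)$ by continuity.

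The key computation is $(\Gamma_2(t))(u)$. First,
\[
u-\frac{q(t)}{4\pi|\mathbf{x}-\mathbf{y}(\tau_2)|}=u^{\mathrm{in}}(\mathbf{x},t)+\frac{q(\tau_2)-q(t)}{4\pi|\mathbf{x}-\mathbf{y}(\tau_2)|}.
\]
From \eqref{tau2} we have $t-\tau_2=|\mathbf{x}-\mathbf{y}(\tau_2)|/c$. Hence, for $q\in C^1$, the mean value theorem gives
\[
\frac{q(\tau_2)-q(t)}{|\mathbf{x}-\mathbf{y}(\tau_2)|}=-\frac{q(t)-q(\tau_2)}{c(t-\tau_2)}\to-\frac{q'(t)}{c}.
\]
Therefore $(\Gamma_2(t))(u)=u^{\mathrm{in}}(\mathbf{y}(t),t)-\frac{1}{4\pi c}q'(t)$. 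The impedance condition $\Gamma_2=\alpha\Gamma_1$ then becomes exactly
\[
\frac{1}{4\pi c}q'+\alpha q=u^{\mathrm{in}}(\mathbf{y}(t),t),\qquad q(0)=0.
\]
This linear ODE has the unique $C^1$ solution \eqref{scc}, since $t\mapsto u^{\mathrm{in}}(\mathbf{y}(t),t)$ is continuous. It remains to check that $u$ satisfies the regularized equation. Near the worldline, $u-q(t)/(4\pi|\mathbf{x}-\mathbf{y}(t)|)$ differs from the regular part only by terms whose distributional Laplacian carries no $\delta$-mass. Away from the worldline, $\frac{1}{c^2}\partial_{tt}u^{\mathrm{sc}}-\Delta u^{\mathrm{sc}}=0$. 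This gives $\frac1{c^2}\partial_{tt}u-\Delta_{\mathrm{reg}}u=f$, read pointwise off the worldline and in the sense of removing the $\delta$ singularity. The initial conditions hold because of the Heaviside factor and $q(0)=0$. That \eqref{sc} solves \eqref{eq:scIVP} is the standard Liénard–Wiechert computation, valid under the subsonic condition, just as for \eqref{in}.

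For uniqueness, I let $u_1,u_2$ be two solutions and set $w=u_1-u_2$. Then $w$ solves \eqref{eq:totalIVP} with $f=0$. Put $p(t):=(\Gamma_1(t))(w)$. The function $w-w_p$, where $w_p$ is the retarded field \eqref{sc} with intensity $p$, has no singularity at the worldline. It satisfies the free wave equation across the worldline, since the $\delta$-source produced by the singular part is exactly cancelled, and it has zero Cauchy data. By uniqueness for the free Cauchy problem, $w=w_p$. The computation above then turns the impedance condition into $\frac{1}{4\pi c}p'+\alpha p=0$ with $p(0)=0$, so $p\equiv0$ and $w\equiv0$.

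The main obstacle is making rigorous the claim that removing the singular part leaves a genuine (distributional) solution of the free wave equation across the moving worldline. In other words, one must show that $\frac{1}{c^2}\partial_{tt}-\Delta$ applied to $w_p$ yields exactly $p(t)\delta(\mathbf{x}-\mathbf{y}(t))$, and that this matches the singular part $\frac{p}{4\pi|\mathbf{x}-\mathbf{y}(t)|}$ used in $\Delta_{\mathrm{reg}}$. I would first try to verify this by testing against $\phi\in C_c^\infty$ and excising a shrinking ball around $\mathbf{y}(t)$. The boundary flux integrals converge to $p(t)\phi(\mathbf{y}(t),t)$ by Lemma~\ref{rem:tau2_limit}, and the $C^1$ and subsonic bounds keep $|\nabla\tau_2|$ bounded. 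Some regularity class for solutions, such as a regular part in $C^2$ off the worldline and continuous up to it, has to be fixed to make this argument precise.
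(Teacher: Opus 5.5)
This is essentially the paper's own proof. It uses the same ansatz $u=u^{\mathrm{in}}+u^{\mathrm{sc}}$, the same evaluation of $(\Gamma_1(t))(u)=q(t)$ via Lemma~\ref{rem:tau2_limit}, and the same evaluation of $(\Gamma_2(t))(u)=u^{\mathrm{in}}(\mathbf{y}(t),t)-\frac{1}{4\pi c}q'(t)$ via $t-\tau_2=|\mathbf{x}-\mathbf{y}(\tau_2)|/c$. Uniqueness also goes the same way: you write $w$ as the retarded field with intensity $(\Gamma_1(t))(w)$ and reduce the impedance condition to a homogeneous linear ODE with zero initial value. Deriving the ODE for $q$ from the impedance condition, rather than verifying it, is only a reordering.

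The step you flag as the main obstacle, that \eqref{sc} solves \eqref{eq:scIVP}, is also only asserted in the paper, and your caution is warranted. The actual retarded (Li\'enard--Wiechert) solution carries the Doppler factor $\bigl(1-\frac{(\mathbf{x}-\mathbf{y}(\tau_2))\cdot\dot{\mathbf{y}}(\tau_2)}{c\,|\mathbf{x}-\mathbf{y}(\tau_2)|}\bigr)^{-1}$, which is absent from \eqref{sc}. So for a genuinely moving scatterer this step fails in your version and in the paper's alike. Moreover, for the true field the limit defining $\Gamma_1(t)$ would depend on the direction of approach whenever $\dot{\mathbf{y}}(t)\neq 0$ and $q(t)\neq 0$.
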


\begin{proof} (i) Existence. 
We first verify that \(u = u^{\mathrm{in}} + u^{\mathrm{sc}}\) satisfies \eqref{eq:totalIVP}.  
Recall from \eqref{sc} that \(u^{\mathrm{sc}}\) has the explicit representation
\[
u^{\mathrm{sc}}(\mathbf{x},t) 
= 
\frac
	{H\bigl(t - |\mathbf{x} - \mathbf{y}(0)|/c\bigr)}
	{4\pi|\mathbf{x} - \mathbf{y}(\tau_2)|}
	\, 
q(\tau_2).
\]
Since the support of \(f\) does not intersect \(\{(\mathbf{y}(t),t)\}\), the field \(u^{\mathrm{in}}\) has no spatial singularity at \(\mathbf{x}=\mathbf{y}(t)\) for every fixed $t\geq0$.
Hence, by Lemma \ref{rem:tau2_limit},
\[
\lim_{\mathbf{x}\to\mathbf{y}(t)} 
4\pi|\mathbf{x}-\mathbf{y}(\tau_2)|\, u^{in}(\mathbf{x},t)=
4\pi u^{in}(\mathbf{y}(t),t)  \lim_{\mathbf{x}\to\mathbf{y}(t)}  |\mathbf{x}-\mathbf{y}(\tau_2)|=0 
\]
 Consequently,
\begin{align*}
(\Gamma_1(t))(u) 
&= 
\lim_{\mathbf{x}\to\mathbf{y}(t)} 
	4\pi|\mathbf{x}-\mathbf{y}(\tau_2)|\, u(\mathbf{x},t) \\
&= 
\lim_{\mathbf{x}\to\mathbf{y}(t)} 
	4\pi|\mathbf{x}-\mathbf{y}(\tau_2)|\, u^{\mathrm{sc}}(\mathbf{x},t) \\
&= 
\lim_{\mathbf{x}\to\mathbf{y}(t)} 
H
\bigl(
	t - |\mathbf{x} - \mathbf{y}(0)|/c
\bigr)
\, q(\tau_2) \\
&= H
\bigl(
	t - |\mathbf{y}(t) - \mathbf{y}(0)|/c
\bigr)\, q(t) = q(t).
\end{align*}
Note that, in the last equality, the subsonic condition has been used to derive that \(t - |\mathbf{y}(t) - \mathbf{y}(0)|/c > 0\).

Now we compute
\begin{align*}
&\partial_{tt}u - c^2\Delta_{\mathrm{reg}} u - c^2 f\\
=& \partial_{tt}u - c^2\Delta\!\left( u - \frac{(\Gamma_1(t))(u)}{4\pi|\mathbf{x}-\mathbf{y}(t)|} \right) - c^2 f \\
=& \partial_{tt}u - c^2\Delta u - c^2\delta(\mathbf{x}-\mathbf{y}(t))\,(\Gamma_1(t))(u) - c^2 f \\
=& (\partial_{tt}u^{\mathrm{in}} - c^2\Delta u^{\mathrm{in}}) + (\partial_{tt}u^{\mathrm{sc}} - c^2\Delta u^{\mathrm{sc}}) - c^2\delta(\mathbf{x}-\mathbf{y}(t))\,(\Gamma_1(t))(u) - c^2 f \\
=& c^2 f + \partial_{tt}u^{\mathrm{sc}} - c^2\Delta u^{\mathrm{sc}} - c^2\delta(\mathbf{x}-\mathbf{y}(t))\,(\Gamma_1(t))(u) - c^2 f \\
=& \partial_{tt}u^{\mathrm{sc}} - c^2\Delta u^{\mathrm{sc}} - c^2\delta(\mathbf{x}-\mathbf{y}(t))\,(\Gamma_1(t))(u).
\end{align*}
The right-hand side of the previous equation must vanish, because \(
(\Gamma_1(t))(u)=q(t)
\) and 
\[
\partial_{tt}u^{\mathrm{sc}} - c^2\Delta u^{\mathrm{sc}} = 
c^2 q(t)\,\delta(\mathbf{x}-\mathbf{y}(t)).
\] 
Hence the partial differential equation in \eqref{eq:totalIVP} holds.

We continue to verify the boundary condition in \eqref{eq:totalIVP} as $\mathbf{x}\to\mathbf{y}(t)$ in the following.
\begin{align*}
(\Gamma_2(t))(u) 
&= 
\lim_{\mathbf{x}\to\mathbf{y}(t)}
\left( 
	u(\mathbf{x},t) - \frac{(\Gamma_1(t))(u)}{4\pi|\mathbf{x}-\mathbf{y}(\tau_2)|} 
\right) \\
&= u^{\mathrm{in}}(\mathbf{y}(t),t) + \lim_{\mathbf{x}\to\mathbf{y}(t)}
\left( 
	u^{\mathrm{sc}}(\mathbf{x},t) - \frac{q(t)}{4\pi|\mathbf{x}-\mathbf{y}(\tau_2)|} 
\right) \\
&= u^{\mathrm{in}}(\mathbf{y}(t),t) 
+ \lim_{\mathbf{x}\to\mathbf{y}(t)} 
\frac{q(\tau_2)-q(t)}
	{4\pi|\mathbf{x}-\mathbf{y}(\tau_2)|} \\
&= u^{\mathrm{in}}(\mathbf{y}(t),t) 
+ \frac{1}{4\pi c}\lim_{\mathbf{x}\to\mathbf{y}(t)} \frac{q(\tau_2)-q(t)}{t-\tau_2} \\
&= u^{\mathrm{in}}(\mathbf{y}(t),t) - \frac{1}{4\pi c}q'(t).
\end{align*}
where we have used the equation \eqref{tau2}.
Inserting the following ODE to the above equation,
\[
q'(t) = -4\pi c\alpha q(t) + 4\pi c u^{\mathrm{in}}(\mathbf{y}(t),t),
\]
we obtain
\[
(\Gamma_2(t))(u) = u^{\mathrm{in}}(\mathbf{y}(t),t) - \frac{1}{4\pi c}\bigl(-4\pi c\alpha q(t) + 4\pi c u^{\mathrm{in}}(\mathbf{y}(t),t)\bigr) = \alpha q(t) = \alpha (\Gamma_1(t))(u).
\]
The initial conditions are clearly satisfied. Thus \(u=u^{\mathrm{in}}+u^{\mathrm{sc}}\) indeed satisfies the initial value problem \eqref{eq:totalIVP}.

(ii) Uniqueness. We need to verify that the homogeneous problem \eqref{eq:totalIVP} with \(f=0\), 
\begin{equation*}
	\begin{cases}
		\frac{1}{c^2}\partial_{tt}w - \Delta_{\mathrm{reg}} w = 0, & (\mathbf{x},t)\in \mathbb{R}^3\times[0,\infty), \\[4pt]
		(\Gamma_2(t))(w) = \alpha (\Gamma_1(t))(w), & t> 0, \\[4pt]
		w(\mathbf{x},0)=\partial_t w(\mathbf{x},0)=0, & \mathbf{x}\in\mathbb{R}^3.
	\end{cases}
\end{equation*}
admits only the trivial solution \(w=0\).
By definition of the regular Laplacian \(\Delta_{\mathrm{reg}}\), 
\[
\frac{1}{c^2}\partial_{tt}w - \Delta w = \delta(\mathbf{x}-\mathbf{y}(t))\,(\Gamma_1(t))(w), \qquad (\mathbf{x},t)\in \mathbb{R}^3\times[0,\infty).
\]
Thus, \(w\) takes the form (e.g. \eqref{sc})
\[
w(\mathbf{x},t) = \frac{H\bigl(t-|\mathbf{x}-\mathbf{y}(0)|/c\bigr)}{4\pi\,|\mathbf{x}-\mathbf{y}(\tau_2(\mathbf{x},t))|}\,(\Gamma_1(\tau_2(\mathbf{x},t)))(w),\qquad \mathbf{x}\neq\mathbf{y}(t).
\]
Applying the definition of \(\Gamma_2\) and using the above form of $w$ yields
\begin{equation}\label{w}
(\Gamma_2(t))(w) = \lim_{\mathbf{x}\to\mathbf{y}(t)}\left( w(\mathbf{x},t) - \frac{(\Gamma_1(t))(w)}{4\pi|\mathbf{x}-\mathbf{y}(\tau_2)|} \right) = -\frac{1}{4\pi c}\frac{d}{dt}\bigl((\Gamma_1(t))(w)\bigr).
\end{equation} 
Together with 
\(
(\Gamma_2(t))(w)=\alpha(\Gamma_1(t))(w),
\)
this gives 
\[
\frac{d}{dt}((\Gamma_1(t))(w)) = -4\pi c\alpha (\Gamma_1(t))(w),
\]
whose general solution is given by 
\[
(\Gamma_1(t))(w) = C\,e^{-4\pi c\alpha t},\quad t>0.
\]
From the initial condition \(w(\cdot,0)=0\) we get \((\Gamma_1(0))(w)=0\), so \(C=0\) and \((\Gamma_1(t))(w)\equiv0\) for all $t>0$. Consequently, it follows from \eqref{w} that \(w\equiv0\), which proves uniqueness of the problem  
\eqref{eq:totalIVP}.
\end{proof}

\begin{remark}\label{rem:NojaPosilicano}
When the scatterer is held stationary ($\mathbf{y}(t)\equiv\mathbf{y}_0$ for all $t>0$) and the wave speed is normalized to $c=1$, the decomposition of $u$ and the ordinary differential equation for $q(t)$ in Theorem~\ref{thm:existence} reduce to the single-scatterer ($n=1$) case of the model studied by Mantile and Posilicano~\cite{MantilePosilicano2020}. More precisely,
\(u^{\mathrm{sc}}\) is the solution of
\begin{equation*}
	\begin{cases}
		(\frac{1}{c^2}\partial_{tt}-\Delta)u^{\mathrm{sc}} =  q(t)\,\delta(\mathbf{x}-\mathbf{y}_0), & (\mathbf{x},t)\in \mathbb{R}^3\times[0,\infty), \\[4pt]
		u^{\mathrm{sc}}(\mathbf{x},0)=\partial_t u^{\mathrm{sc}}(\mathbf{x},0)=0, & \mathbf{x}\in\mathbb{R}^3,
	\end{cases}
\end{equation*}
with the intensity \(q(t)\) satisfying the ordinary differential equation
\[
\frac{1}{4\pi c}q'(t) +\alpha q(t) = u^{\mathrm{in}}(\mathbf{y}_0,t),\qquad q(0)=0.
\]
In this simple case, we have \(\tau_2=t-|\mathbf{x}-\mathbf{y}_0|\) and (see e.g., \eqref{scc})
\[
u^{\mathrm{sc}}(\mathbf{x},t) = \frac{H\bigl(t-|\mathbf{x}-\mathbf{y}_0|\bigr)}{|\mathbf{x}-\mathbf{y}_0|}
\int_0^{t-|\mathbf{x}-\mathbf{y}_0|} e^{-4\pi \alpha(t-|\mathbf{x}-\mathbf{y}_0|-s)}\, u^{\mathrm{in}}(\mathbf{y}_0,s)\, ds,\qquad \mathbf{x}\neq\mathbf{y}_0.
\]
Our formulation therefore extends that framework to a moving point scatterer with general wave speed $c>0$. The solution for multiple moving point-like scatterers can be derived similarly; see e.g., \cite{KurasovPosilicano2005}. 
\end{remark}

Theorem~\ref{thm:existence} provides an expression of the scattered field used in subsection \ref{sec1.2},  when the source term \(f\) corresponds to a moving point emitter. That is, the incident field \(u^{\mathrm{in}}\) is given by \eqref{in}  and the scattered field \(u^{\mathrm{sc}}\) follows from the ODE for \(q(t)\), leading to the explicit integral representation \eqref{scc}. 

\subsection{Continuous dependence of the forward model}

The goal of this subsection is to verify the continuous dependence of the scattered field on the model coefficients $(\alpha,\mathbf{y})$.
Lemma~\ref{lem:cont-dep-incident} treats a single moving point emitter and shows that the incited wave field  depends continuously on the emitter's trajectory and strength function.
Lemma~\ref{lem:cont-dep-q} shows that the intensity $q(t)$ of the scattered field depends continuously on the scatterer's parameters $(\alpha,\mathbf{y})$.
Viewing the scatterer as a secondary moving point emitter with strength $q(t)$ and trajectory $\mathbf{y}(t)$, Lemma~\ref{lem:cont-dep-incident} applies directly, and it together with Lemma~\ref{lem:cont-dep-q} yields Theorem~\ref{thm:cont-dep-sc}, the desired estimate for the scattered field $u^{\mathrm{sc}}$.

\begin{lemma}\label{lem:cont-dep-incident}
	 Consider two moving point emitters with the parameters $(\varphi_j,\mathbf{z}_j)$, $j=1,2$, satisfying
	\begin{itemize}
		\item $\varphi_j\in C^1([0,T])$ with $\|\varphi_j\|_{C^1([0,T])}\le M_\varphi$ for some $M_\varphi>0$,
		\item $\mathbf{z}_j\in C^1([0,T];\mathbb{R}^3)$ with $\|\dot{\mathbf{z}}_j\|_{C^0([0,T];\mathbb{R}^3)}\le c_0$.
	\end{itemize}
	Denote by $u_j(\mathbf{x},t)$ the wave field generated by the $j$-th emitter:
	\begin{equation*}
		u_j(\mathbf{x},t)=\frac{H\bigl(t-|\mathbf{x}-\mathbf{z}_j(0)|/c\bigr)}{4\pi\,|\mathbf{x}-\mathbf{z}_j(\tau_{1,j}(\mathbf{x},t))|}\;\varphi_j(\tau_{1,j}(\mathbf{x},t)),
	\end{equation*}
	where $\tau_{1,j}(\mathbf{x},t)\in(0,t]$ is the unique solution of
	\begin{equation}\label{eq:retarded_j}
		t-\tau_{1,j}-\frac{|\mathbf{x}-\mathbf{z}_j(\tau_{1,j})|}{c}=0.
	\end{equation}
	Define the arrival time threshold
	\begin{equation*}
		t_0(\mathbf{x}):=\max_{j=1,2}\frac{|\mathbf{x}-\mathbf{z}_j(0)|}{c}.
	\end{equation*}
	Assume that $T>\max_{\mathbf{x}\in\Omega}t_0(\mathbf{x})$, and
		let $\Omega\subset\mathbb{R}^3$ be a compact set satisfying
	\begin{equation*}
		d:=\inf_{\mathbf{x}\in\Omega,\;t\in[0,T],\;j=1,2}|\mathbf{x}-\mathbf{z}_j(t)|>0.
	\end{equation*}
Then there exists a constant $C=C(c,c_0,d,M_\varphi)>0$ such that for every $\mathbf{x}\in\Omega$ and every $t\in[t_0(\mathbf{x}),T]$,
	\begin{equation}\label{eq:cont-dep-estimate}
		\bigl|u_1(\mathbf{x},t)-u_2(\mathbf{x},t)\bigr|
		\le C\Bigl(\|\varphi_1-\varphi_2\|_{C^0([0,T])}+\|\mathbf{z}_1-\mathbf{z}_2\|_{C^0([0,T];\mathbb{R}^3)}\Bigr).
	\end{equation}
\end{lemma}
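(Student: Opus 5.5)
Since $t\ge t_0(\mathbf{x})$, both Heaviside factors equal one, so on the range in question
\[
u_j(\mathbf{x},t)=\frac{\varphi_j(\tau_j)}{4\pi R_j},\qquad \tau_j:=\tau_{1,j}(\mathbf{x},t),\quad R_j:=|\mathbf{x}-\mathbf{z}_j(\tau_j)|=c(t-\tau_j)\ge d .
\]
The plan is to first bound the difference of retarded times $|\tau_1-\tau_2|$ by $\|\mathbf{z}_1-\mathbf{z}_2\|_{C^0}$. Then split the difference of fields into a strength part and a geometric part, each controlled by that bound.

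\textbf{Step 1: retarded-time estimate.} Subtract the two equations \eqref{eq:retarded_j}:
\[
\tau_2-\tau_1=\frac{1}{c}\Bigl(|\mathbf{x}-\mathbf{z}_1(\tau_1)|-|\mathbf{x}-\mathbf{z}_2(\tau_2)|\Bigr).
\]
By the reverse triangle inequality,
\[
\bigl||\mathbf{x}-\mathbf{z}_1(\tau_1)|-|\mathbf{x}-\mathbf{z}_2(\tau_2)|\bigr|\le|\mathbf{z}_1(\tau_1)-\mathbf{z}_1(\tau_2)|+|\mathbf{z}_1(\tau_2)-\mathbf{z}_2(\tau_2)|.
\]
The first term is at most $c_0|\tau_1-\tau_2|$ by the mean value inequality, since $|\dot{\mathbf{z}}_1|\le c_0$. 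The second term is at most $\|\mathbf{z}_1-\mathbf{z}_2\|_{C^0}$. Hence
\[
|\tau_1-\tau_2|\le \frac{c_0}{c}|\tau_1-\tau_2|+\frac1c\|\mathbf{z}_1-\mathbf{z}_2\|_{C^0},
\]
and absorbing the first term gives
\[
|\tau_1-\tau_2|\le \frac{1}{c-c_0}\|\mathbf{z}_1-\mathbf{z}_2\|_{C^0}.
\]
This absorption is the only genuinely delicate point: it uses the strict subsonic condition $c_0<c$ in the same way as Lemma~\ref{rem:tau2_limit}. We also need $\tau_1,\tau_2\in[0,T]$, which holds because $t\le T$ and $\tau_j\le t$, so all norms over $[0,T]$ apply.

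\textbf{Step 2: splitting.} Write
\[
u_1-u_2=\frac{\varphi_1(\tau_1)-\varphi_2(\tau_2)}{4\pi R_1}+\frac{\varphi_2(\tau_2)}{4\pi}\Bigl(\frac1{R_1}-\frac1{R_2}\Bigr).
\]
For the first term, bound the numerator by
\[
|\varphi_1(\tau_1)-\varphi_2(\tau_1)|+|\varphi_2(\tau_1)-\varphi_2(\tau_2)|\le\|\varphi_1-\varphi_2\|_{C^0}+M_\varphi|\tau_1-\tau_2|,
\]
using the $C^1$ bound on $\varphi_2$, and divide by $4\pi d$. For the second term, use $R_j=c(t-\tau_j)$ and $R_j\ge d$ to get
\[
\Bigl|\frac1{R_1}-\frac1{R_2}\Bigr|=\frac{c|\tau_1-\tau_2|}{R_1R_2}\le \frac{c}{d^2}|\tau_1-\tau_2|,
\]
together with $|\varphi_2|\le M_\varphi$.

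\textbf{Step 3: conclusion.} Inserting the Step 1 bound into both terms yields \eqref{eq:cont-dep-estimate} with
\[
C=\max\Bigl\{\frac{1}{4\pi d},\;\frac{M_\varphi}{4\pi(c-c_0)}\Bigl(\frac1d+\frac{c}{d^2}\Bigr)\Bigr\},
\]
which depends only on $c,c_0,d,M_\varphi$ and not on $\mathbf{x}$ or $t$.
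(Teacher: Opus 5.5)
Your proof is correct and follows the paper's argument essentially step for step. It uses the same absorption estimate $|\tau_{1}-\tau_{2}|\le \|\mathbf{z}_1-\mathbf{z}_2\|_{C^0}/(c-c_0)$, the same two-term splitting (with the roles of the indices swapped), and arrives at the same final constant. The only cosmetic difference is in bounding $|R_1-R_2|$: you compute it exactly as $c|\tau_1-\tau_2|$ from the retarded-time identity, whereas the paper uses the triangle inequality $|R_1-R_2|\le|\mathbf{z}_1(\tau_1)-\mathbf{z}_2(\tau_2)|$; both routes give $\frac{c}{d^2(c-c_0)}\|\mathbf{z}_1-\mathbf{z}_2\|_{C^0}$.
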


\begin{proof}
	For $t\geq t_0(\mathbf{x})$, the Heaviside factors in $u_1$ and $u_2$ both equal one. We first estimate the difference of the retarded times. From the retarded-time equation \eqref{eq:retarded_j}, one obtains
	\begin{align*}
		|\tau_{1,1}(\mathbf{x},t)-\tau_{1,2}(\mathbf{x},t)|
		&=\frac{1}{c}\bigl||\mathbf{x}-\mathbf{z}_1(\tau_{1,1}(\mathbf{x},t))|-|\mathbf{x}-\mathbf{z}_2(\tau_{1,2}(\mathbf{x},t))|\bigr|\\
		&\le\frac{1}{c}\bigl(|\mathbf{z}_1(\tau_{1,1}(\mathbf{x},t))-\mathbf{z}_1(\tau_{1,2}(\mathbf{x},t))|+|\mathbf{z}_1(\tau_{1,2}(\mathbf{x},t))-\mathbf{z}_2(\tau_{1,2}(\mathbf{x},t))|\bigr)\\
		&\le \frac{1}{c}\bigl(c_0|\tau_{1,1}(\mathbf{x},t)-\tau_{1,2}(\mathbf{x},t)|+|\mathbf{z}_1(\tau_{1,2}(\mathbf{x},t))-\mathbf{z}_2(\tau_{1,2}(\mathbf{x},t))|\bigr),
	\end{align*}
	which yields
	\[
	|\tau_{1,1}(\mathbf{x},t)-\tau_{1,2}(\mathbf{x},t)|\le\frac{1}{c-c_0}\|\mathbf{z}_1-\mathbf{z}_2\|_{C^0}.
	\]
	
	Now we decompose $u_1-u_2$ as
	\begin{align*}
		|u_1-u_2|
		&=\frac{1}{4\pi}\biggl|
		\frac{\varphi_1(\tau_{1,1}(\mathbf{x},t))}{|\mathbf{x}-\mathbf{z}_1(\tau_{1,1}(\mathbf{x},t))|}
		-\frac{\varphi_2(\tau_{1,2}(\mathbf{x},t))}{|\mathbf{x}-\mathbf{z}_2(\tau_{1,2}(\mathbf{x},t))|}
		\biggr|\\
		&\le\frac{|\varphi_1(\tau_{1,1}(\mathbf{x},t))|}{4\pi}
		\Bigl|\frac{1}{|\mathbf{x}-\mathbf{z}_1(\tau_{1,1}(\mathbf{x},t))|}-\frac{1}{|\mathbf{x}-\mathbf{z}_2(\tau_{1,2}(\mathbf{x},t))|}\Bigr|\\
		&\quad +\frac{|\varphi_1(\tau_{1,1}(\mathbf{x},t))-\varphi_2(\tau_{1,2}(\mathbf{x},t))|}{4\pi|\mathbf{x}-\mathbf{z}_2(\tau_{1,2}(\mathbf{x},t))|}.
	\end{align*}
	
	For the first summand, using $\|\varphi_1\|_{C^0}\le M_\varphi$ and $|\mathbf{x}-\mathbf{z}_j(\tau_{1,j}(\mathbf{x},t))|\ge d$, we obtain
	\begin{align*}
		&\quad |\varphi_1(\tau_{1,1}(\mathbf{x},t))|
		\Bigl|\frac{1}{|\mathbf{x}-\mathbf{z}_1(\tau_{1,1}(\mathbf{x},t))|}-\frac{1}{|\mathbf{x}-\mathbf{z}_2(\tau_{1,2}(\mathbf{x},t))|}\Bigr|\\
		&\leq
		M_\varphi\,\frac{|\mathbf{z}_1(\tau_{1,1}(\mathbf{x},t))-\mathbf{z}_2(\tau_{1,2}(\mathbf{x},t))|}{d^2}\\
		&\le\frac{M_\varphi}{d^2}\bigl(c_0|\tau_{1,1}(\mathbf{x},t)-\tau_{1,2}(\mathbf{x},t)|+\|\mathbf{z}_1-\mathbf{z}_2\|_{C^0}\bigr)\\
		&\le\frac{M_\varphi}{d^2}\frac{c}{c-c_0}\|\mathbf{z}_1-\mathbf{z}_2\|_{C^0}.
	\end{align*}
	
	For the second summand, we bound the numerator by
	\[
	|\varphi_1(\tau_{1,1}(\mathbf{x},t))-\varphi_2(\tau_{1,2}(\mathbf{x},t))|\le M_\varphi|\tau_{1,1}(\mathbf{x},t)-\tau_{1,2}(\mathbf{x},t)|+\|\varphi_1-\varphi_2\|_{C^0}.
	\]
	Consequently,
	\[
	\frac{|\varphi_1(\tau_{1,1}(\mathbf{x},t))-\varphi_2(\tau_{1,2}(\mathbf{x},t))|}{|\mathbf{x}-\mathbf{z}_2(\tau_{1,2}(\mathbf{x},t))|}
	\le\frac{M_\varphi}{d(c-c_0)}\|\mathbf{z}_1-\mathbf{z}_2\|_{C^0}
	+\frac{1}{d}\|\varphi_1-\varphi_2\|_{C^0}.
	\]
	
	Collecting the above estimates gives
	\[
	|u_1-u_2|
	\le C_\mathbf{z}\|\mathbf{z}_1-\mathbf{z}_2\|_{C^0}
	+C_\varphi\|\varphi_1-\varphi_2\|_{C^0},
	\]
	with
	\[
	C_\mathbf{z}=\frac{M_\varphi}{4\pi}\Bigl(\frac{c}{d^2(c-c_0)}+\frac{1}{d(c-c_0)}\Bigr),\qquad
	C_\varphi=\frac{1}{4\pi d}.
	\]
	Taking $C=\max\{C_\mathbf{z},C_\varphi\}$ yields \eqref{eq:cont-dep-estimate}.
\end{proof}

\begin{lemma}\label{lem:cont-dep-q}
Let the incident field $u^{\mathrm{in}}$ be generated by a moving point emitter with parameters $(\varphi,\mathbf{z})$ as in subsection \ref{sec1.2}.
Consider two moving point scatterers with parameters $(\alpha_j,\mathbf{y}_j)$, $j=1,2$, satisfying
\begin{itemize}
  \item $\alpha_j\in(m_{\alpha},M_\alpha)\subset (0,\infty)$,
  \item $\mathbf{y}_j\in C^1([0,T];\mathbb{R}^3)$ with $\|\dot{\mathbf{y}}_j\|_{C^0([0,T];\mathbb{R}^3)}\le c_0$, and $\mathbf{y}_j(t)\in\Omega_1$ for all $t\in[0,T]$, where $\Omega_1\subset\mathbb{R}^3$ is a compact set with $ d_{\mathrm{in}}:=\operatorname{dist}(\Omega_1,\{\mathbf{z}(t):t\in[0,T]\})>0$.
\end{itemize}
Let $q_j\in C^1([0,T])$ be the unique solution of the ordinary differential equation
\begin{equation*}
q_j'(t)+4\pi c\alpha_j\,q_j(t)=4\pi c\,u^{\mathrm{in}}(\mathbf{y}_j(t),t),\qquad q_j(0)=0,\quad j=1,2.
\end{equation*}
Then there exists a constant $C_q=C_q(c,c_0,m_\alpha,M_\alpha,d_{\mathrm{in}},\varphi,\mathbf{z},T)>0$ such that
\begin{equation*}
\|q_1-q_2\|_{C^1([0,T])}\le C_q\Bigl(|\alpha_1-\alpha_2|+\|\mathbf{y}_1-\mathbf{y}_2\|_{C^0([0,T];\mathbb{R}^3)}\Bigr).
\end{equation*}
\end{lemma}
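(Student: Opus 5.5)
The plan is to work directly with the explicit variation-of-constants formula
\[
q_j(t)=4\pi c\int_0^t e^{-4\pi c\alpha_j(t-s)}\,g_j(s)\,ds,\qquad g_j(s):=u^{\mathrm{in}}(\mathbf{y}_j(s),s),
\]
and to bound $q_1-q_2$ in $C^0$ first, then recover the $C^1$ bound from the ODE itself.

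\textbf{Step 1: bounds on $g_j$.} Since $\mathbf{y}_j(s)\in\Omega_1$ and $\operatorname{dist}(\Omega_1,\{\mathbf{z}(t)\})=d_{\mathrm{in}}>0$, the representation \eqref{in} gives
\[
|g_j(s)|\le \frac{\|\varphi\|_{C^0}}{4\pi d_{\mathrm{in}}}=:M_g .
\]
Next I would show that $\mathbf{x}\mapsto u^{\mathrm{in}}(\mathbf{x},s)$ is Lipschitz on $\Omega_1$, uniformly in $s\in[0,T]$, with constant $L$ depending on $c,c_0,d_{\mathrm{in}},\|\varphi\|_{C^1}$. The argument follows Lemma~\ref{lem:cont-dep-incident}, but now the field point varies rather than the source. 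From the retarded-time equation,
\[
|\tau_1(\mathbf{x},s)-\tau_1(\mathbf{x}',s)|\le \frac{|\mathbf{x}-\mathbf{x}'|}{c-c_0},
\]
and the same splitting of the quotient $\varphi(\tau_1)/|\mathbf{x}-\mathbf{z}(\tau_1)|$ used in that lemma then yields the Lipschitz bound. Combining the two,
\[
|g_1(s)-g_2(s)|\le L\,|\mathbf{y}_1(s)-\mathbf{y}_2(s)|\le L\,\|\mathbf{y}_1-\mathbf{y}_2\|_{C^0}.
\]

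\textbf{Main obstacle: the Heaviside factor.} The delicate point is the factor $H(s-|\mathbf{x}-\mathbf{z}(0)|/c)$ in $u^{\mathrm{in}}$, which makes $u^{\mathrm{in}}$ possibly discontinuous across the arrival front. I will argue that across this front $\tau_1\to 0$, so the jump has size $\varphi(0)/(4\pi|\mathbf{x}-\mathbf{z}(0)|)$. To get the Lipschitz estimate I therefore either assume the compatibility condition $\varphi(0)=0$ or, failing that, restrict to times after the wavefront has passed $\Omega_1$, as is implicitly done in Lemma~\ref{lem:cont-dep-incident}. I would try the compatibility route first, since the assumption $\varphi\in C^1$ and the ODE $q(0)=0$ suggest continuity of $u^{\mathrm{in}}$ is intended. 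With $\varphi(0)=0$ the Heaviside factor can be dropped, because $u^{\mathrm{in}}$ vanishes continuously at the front.

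\textbf{Step 2: the $C^0$ estimate.} Write
\[
\frac{q_1-q_2}{4\pi c}=\int_0^t e^{-4\pi c\alpha_1(t-s)}(g_1-g_2)\,ds+\int_0^t\bigl(e^{-4\pi c\alpha_1(t-s)}-e^{-4\pi c\alpha_2(t-s)}\bigr)g_2\,ds .
\]
The first integral is at most $T L\|\mathbf{y}_1-\mathbf{y}_2\|_{C^0}$. For the second, the mean value theorem gives
\[
|e^{-a r}-e^{-b r}|\le r\,|a-b|\qquad (a,b>0,\ r\ge0),
\]
so it is at most $4\pi c\,T^2 M_g|\alpha_1-\alpha_2|$. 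Hence
\[
\|q_1-q_2\|_{C^0}\le C\bigl(|\alpha_1-\alpha_2|+\|\mathbf{y}_1-\mathbf{y}_2\|_{C^0}\bigr).
\]
Along the way the same formula gives $\|q_2\|_{C^0}\le 4\pi c\,T M_g$.

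\textbf{Step 3: the derivative.} Subtracting the two ODEs,
\[
q_1'-q_2'=-4\pi c\alpha_1(q_1-q_2)-4\pi c(\alpha_1-\alpha_2)q_2+4\pi c(g_1-g_2).
\]
Using $\alpha_1<M_\alpha$, the bound on $\|q_2\|_{C^0}$ from Step 2, and the Lipschitz estimate for $g_1-g_2$ from Step 1, each term is bounded by a multiple of $|\alpha_1-\alpha_2|+\|\mathbf{y}_1-\mathbf{y}_2\|_{C^0}$. This gives the $C^1$ estimate with a constant $C_q$ depending only on the listed quantities.
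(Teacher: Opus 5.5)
Your three steps are the paper's. First, a $C^0$ bound from the variation-of-constants formula. The paper writes an ODE for $r=q_1-q_2$ with forcing $4\pi c(\alpha_2-\alpha_1)q_2+4\pi c(g_1-g_2)$ and bounds $q_2$ by $M_q$ via $m_\alpha$; your splitting with $|e^{-ar}-e^{-br}|\le r|a-b|$ and a $T$-dependent bound on $q_2$ is equivalent. Then $q_1'-q_2'$ is read off from the ODE. The only substantive difference is the spatial Lipschitz bound for $u^{\mathrm{in}}$. The paper simply asserts a constant $L_u=L_u(c,c_0,\mathbf{z},\varphi)$ with $|u^{\mathrm{in}}(\mathbf{y}_1(s),s)-u^{\mathrm{in}}(\mathbf{y}_2(s),s)|\le L_u|\mathbf{y}_1(s)-\mathbf{y}_2(s)|$ for all $s\in[0,T]$. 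You correctly flag the Heaviside factor, and your objection hits the paper's argument too: standing assumption (1) gives $\varphi(0)>0$, and then no such $L_u$ exists.

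What needs fixing is your hedge. The two options are not interchangeable, and only the first proves anything on $[0,T]$. Let $s_j^*$ be the unique time with $c s_j^*=|\mathbf{y}_j(s_j^*)-\mathbf{z}(0)|$. If $\varphi(0)\neq0$ and $s_j^*<T$, then $q_j\equiv0$ on $[0,s_j^*]$, while $q_j'(t)\to c\varphi(0)/|\mathbf{y}_j(s_j^*)-\mathbf{z}(0)|\neq0$ as $t\downarrow s_j^*$, so $q_j\notin C^1([0,T])$. Now take $\mathbf{y}_2=\mathbf{y}_1+\epsilon\mathbf{e}$, with $\mathbf{e}$ the unit vector from $\mathbf{z}(0)$ to $\mathbf{y}_1(s_1^*)$. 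This gives $s_2^*>s_1^*$, and for every $\epsilon>0$, $|q_1'-q_2'|$ stays close to $c\varphi(0)/|\mathbf{y}_1(s_1^*)-\mathbf{z}(0)|$ just after $s_1^*$. So the $C^1$ estimate is false unless $\varphi(0)=0$.

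Restricting to times after the wavefront has passed cannot rescue it. It yields the $C^0$ bound on $[0,T]$ only after an extra argument: the jump is integrated over an interval of length $|s_1^*-s_2^*|\le\|\mathbf{y}_1-\mathbf{y}_2\|_{C^0}/(c-c_0)$. And it yields the derivative bound only off the interval between $s_1^*$ and $s_2^*$.

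You should therefore state $\varphi(0)=0$ (with $\varphi\in C^1$) as an explicit extra hypothesis. It is exactly what the lemma's own requirement $q_j\in C^1([0,T])$ forces once the front reaches the scatterer before $T$. But it contradicts assumption (1), so it is a genuine correction of the lemma, not a harmless normalisation.

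Under this hypothesis your proof is complete. To make ``the Heaviside factor can be dropped'' precise, extend $\varphi$ by $0$ and $\mathbf{z}$ by $\mathbf{z}(0)$ to negative times. Then $u^{\mathrm{in}}(\mathbf{x},s)=\tilde\varphi(\tau_1)/(4\pi|\mathbf{x}-\mathbf{z}(\tau_1)|)$ for all $s$, with $\tilde\varphi$ Lipschitz of constant $\|\varphi'\|_{C^0}$. The splitting of Lemma~\ref{lem:cont-dep-incident}, together with your estimate for $|\tau_1(\mathbf{x},s)-\tau_1(\mathbf{x}',s)|$, then applies verbatim.
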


\begin{proof}  
	First, we establish a uniform bound for the solutions $q_j$.  
	From the retarded formula for $u^{\mathrm{in}}$, it follows that  
	\[
	\sup_{(\mathbf{x},t)\in\Omega_1\times[0,T]}|u^{\mathrm{in}}(\mathbf{x},t)|
	\le
	\frac{M_\varphi}{4\pi d_{\mathrm{in}}}
	<\infty.
	\]  
Observing that the ODE solution is given by $q_j(t)=4\pi c\int_0^t e^{-4\pi c\alpha_j(t-s)}u^{\mathrm{in}}(\mathbf{y}_j(s),s)\,ds$
with $\alpha_j>0$,
 we get the upper bound    
	\begin{equation*}
	|q_j(t)|
	\le 
	\frac{cM_\varphi}{d_{\mathrm{in}}}\int_0^t e^{-4\pi c\alpha_j(t-s)}\,ds
	=
	\frac{M_\varphi\bigl(1-e^{-4\pi c\alpha_j t}\bigr)}{4\pi d_{\mathrm{in}}\alpha_j}
	\le\frac{M_\varphi}{4\pi d_{\mathrm{in}}m_{\alpha}}=:M_q.
	\end{equation*}
	
	Next, we estimate the $C^0$-difference of $q_1$ and $q_2$.  
	Let $r(t)=q_1(t)-q_2(t)$. Subtracting the two ODEs gives  
	\[
	r'(t)+4\pi c\alpha_1 r(t)=4\pi c(\alpha_2-\alpha_1)q_2(t)+4\pi c\bigl(u^{\mathrm{in}}(\mathbf{y}_1(t),t)-u^{\mathrm{in}}(\mathbf{y}_2(t),t)\bigr),\qquad r(0)=0.
	\]  
The solution of the above ODE is  
	\[
	r(t)=4\pi c\int_0^t e^{-4\pi c\alpha_1(t-s)}\bigl[(\alpha_2-\alpha_1)q_2(s)+\bigl(u^{\mathrm{in}}(\mathbf{y}_1(s),s)-u^{\mathrm{in}}(\mathbf{y}_2(s),s)\bigr)\bigr]\,ds,
	\] 
 which can be bounded by
 \begin{align*}
		|r(t)|
		&\le 4\pi c\int_0^t\bigl(|\alpha_1-\alpha_2|\,|q_2(s)|+L_u\|\mathbf{y}_1-\mathbf{y}_2\|_{C^0}\bigr)\,ds\\
		&\le 4\pi c\int_0^t\bigl(|\alpha_1-\alpha_2|M_q+L_u\|\mathbf{y}_1-\mathbf{y}_2\|_{C^0}\bigr)\,ds\\
		&\le 4\pi c T\bigl(M_q|\alpha_1-\alpha_2|+L_u\|\mathbf{y}_1-\mathbf{y}_2\|_{C^0}\bigr),
	\end{align*}
for some positive constant 
\[
L_u = L_u(c, c_0, \mathbf{z}, \varphi) < \infty.
\]
Thus $\|q_1-q_2\|_{C^0}\le C_0(|\alpha_1-\alpha_2|+\|\mathbf{y}_1-\mathbf{y}_2\|_{C^0})$ with $C_0=4\pi c T\max\{M_q,L_u\}$.
	
	Finally, we estimate the first derivative of the intensity function.  
	Using the ODE directly,  
	\[
	q_1'-q_2' = 4\pi c\bigl(u^{\mathrm{in}}(\mathbf{y}_1(t),t)-u^{\mathrm{in}}(\mathbf{y}_2(t),t)\bigr)-4\pi c(\alpha_1-\alpha_2)q_1-4\pi c\alpha_2(q_1-q_2).
	\]  
	Hence,  
	\begin{align*}
		\|q_1'-q_2'\|_{C^0}
		&\le 4\pi c L_u\|\mathbf{y}_1-\mathbf{y}_2\|_{C^0}+4\pi c M_q|\alpha_1-\alpha_2|+4\pi c M_\alpha\|q_1-q_2\|_{C^0}\\
		&\le \bigl(4\pi c M_q+4\pi c M_\alpha C_0\bigr)|\alpha_1-\alpha_2|+\bigl(4\pi c L_u+4\pi c M_\alpha C_0\bigr)\|\mathbf{y}_1-\mathbf{y}_2\|_{C^0}.
	\end{align*}  
	Since $\|q_1-q_2\|_{C^1}=\|q_1-q_2\|_{C^0}+\|q_1'-q_2'\|_{C^0}$, collecting constants yields the desired estimate with  
	\[
	C_q=C_0+4\pi c M_\alpha C_0+4\pi c\max\{M_q, L_u\}.
	\]  
\end{proof}  

\begin{theorem}\label{thm:cont-dep-sc}
	Let the hypotheses of Lemma~\ref{lem:cont-dep-q} hold.
	Let $\Omega_2\subset\mathbb{R}^3$ be a compact set satisfying
	\[
	d_{\mathrm{sc}}:=\inf_{\mathbf{x}\in\Omega_2,\;t\in[0,T],\;j=1,2}|\mathbf{x}-\mathbf{y}_j(t)|>0.
	\]
	Let $u^{\mathrm{sc}}_j(\mathbf{x},t)$ be the scattered field generated by the $j$-th scatterer, given by
	\begin{equation*}
		u^{\mathrm{sc}}_j(\mathbf{x},t)=\frac{H\bigl(t-|\mathbf{x}-\mathbf{y}_j(0)|/c\bigr)}{4\pi\,|\mathbf{x}-\mathbf{y}_j(\tau_{2,j}(\mathbf{x},t))|}\;q_j(\tau_{2,j}(\mathbf{x},t)),
	\end{equation*}
	where $\tau_{2,j}(\mathbf{x},t)$ solves $t-\tau_{2,j}-|\mathbf{x}-\mathbf{y}_j(\tau_{2,j})|/c=0$.
	Then there exists a constant $C_{\mathrm{sc}}=C_{\mathrm{sc}}(
	c,c_0,
	m_{\alpha},M_{\alpha},
	d_{\mathrm{in}},d_{\mathrm{sc}},
	\varphi,\mathbf{z},
	T)>0$ such that for every $\mathbf{x}\in\Omega_2$ and every $t\in[0,T]$,
	\begin{equation*}
		\bigl|u^{\mathrm{sc}}_1(\mathbf{x},t)-u^{\mathrm{sc}}_2(\mathbf{x},t)\bigr|
		\le C_{\mathrm{sc}}\Bigl(|\alpha_1-\alpha_2|+\|\mathbf{y}_1-\mathbf{y}_2\|_{C^0([0,T];\mathbb{R}^3)}\Bigr).
	\end{equation*}
\end{theorem}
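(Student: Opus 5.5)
We view each scatterer as a secondary moving point emitter with strength $q_j$ and trajectory $\mathbf{y}_j$, and combine Lemma~\ref{lem:cont-dep-incident} with Lemma~\ref{lem:cont-dep-q}. Concretely, $u^{\mathrm{sc}}_j$ has exactly the form of $u_j$ in Lemma~\ref{lem:cont-dep-incident} with $(\varphi_j,\mathbf{z}_j)$ replaced by $(q_j,\mathbf{y}_j)$, $\Omega$ by $\Omega_2$, and $d$ by $d_{\mathrm{sc}}$. The subsonic hypothesis $\|\dot{\mathbf{y}}_j\|_{C^0}\le c_0$ is assumed. The $C^1$ bound on the strengths follows from the proof of Lemma~\ref{lem:cont-dep-q}. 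That proof gives $|q_j|\le M_q$, and the ODE gives $|q_j'|\le 4\pi c\bigl(M_\alpha M_q + M_\varphi/(4\pi d_{\mathrm{in}})\bigr)$. So we may take $M_\varphi$ in Lemma~\ref{lem:cont-dep-incident} to be a constant $M'$ depending only on $c,m_\alpha,M_\alpha,d_{\mathrm{in}},\varphi$.

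For $t\ge t_0(\mathbf{x})=\max_j|\mathbf{x}-\mathbf{y}_j(0)|/c$, Lemma~\ref{lem:cont-dep-incident} then yields
\[
|u^{\mathrm{sc}}_1-u^{\mathrm{sc}}_2|\le C(c,c_0,d_{\mathrm{sc}},M')\bigl(\|q_1-q_2\|_{C^0}+\|\mathbf{y}_1-\mathbf{y}_2\|_{C^0}\bigr).
\]
Inserting Lemma~\ref{lem:cont-dep-q}, which gives $\|q_1-q_2\|_{C^0}\le C_q(|\alpha_1-\alpha_2|+\|\mathbf{y}_1-\mathbf{y}_2\|_{C^0})$, yields the estimate on this time range.

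The main obstacle is the remaining range $t<t_0(\mathbf{x})$, where the Heaviside factors may differ. This is also why the statement claims the bound for all $t\in[0,T]$. I would handle it with the vanishing of $q_j$ at $0$, not with a separate treatment of the jump. If $t<|\mathbf{x}-\mathbf{y}_j(0)|/c$, then $u^{\mathrm{sc}}_j(\mathbf{x},t)=0$. If instead $t\ge|\mathbf{x}-\mathbf{y}_j(0)|/c$, then $\tau_{2,j}(\mathbf{x},t)$ is small, with $\tau_{2,j}\le (t_0-t)\cdot$ something. More precisely, take the retarded-time estimate from the proof of Lemma~\ref{lem:cont-dep-incident}, extended by setting $\tau_{2,j}=0$ before arrival; this is the continuous extension, since the retarded equation degenerates to $\tau=0$ at arrival. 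It gives $|\tau_{2,1}-\tau_{2,2}|\le \|\mathbf{y}_1-\mathbf{y}_2\|_{C^0}/(c-c_0)$.

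For an $\mathbf{x}$ and $t$ where only one field, say $u^{\mathrm{sc}}_1$, has arrived, the other field vanishes. Hence $\tau_{2,1}\le|\tau_{2,1}-\tau_{2,2}|$ with $\tau_{2,2}:=0$. Since $q_1(0)=0$,
\[
|u^{\mathrm{sc}}_1(\mathbf{x},t)|\le \frac{|q_1(\tau_{2,1})|}{4\pi d_{\mathrm{sc}}}\le \frac{\|q_1'\|_{C^0}}{4\pi d_{\mathrm{sc}}}\,\tau_{2,1}\le \frac{M'}{4\pi d_{\mathrm{sc}}(c-c_0)}\|\mathbf{y}_1-\mathbf{y}_2\|_{C^0}.
\]
When neither field has arrived, both vanish and there is nothing to prove.

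To justify the bound on $\tau_{2,1}$, I would check that the argument of Lemma~\ref{lem:cont-dep-incident} for the retarded times still works with the extension $\tau=0$. Write $g_j(\tau)=t-\tau-|\mathbf{x}-\mathbf{y}_j(\tau)|/c$. Each $g_j$ is strictly decreasing with slope at most $-(1-c_0/c)$. Moreover $|g_1-g_2|\le\|\mathbf{y}_1-\mathbf{y}_2\|_{C^0}/c$, and $g_2(0)<0$ when the second field has not arrived. These facts give $\tau_{2,1}\le \|\mathbf{y}_1-\mathbf{y}_2\|_{C^0}/(c-c_0)$. Combining the two time regimes and collecting constants gives $C_{\mathrm{sc}}$ with the stated dependence.
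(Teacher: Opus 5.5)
Your proposal is correct and follows essentially the same route as the paper. After arrival (splitting at $t_1(\mathbf{x})=\max_j|\mathbf{x}-\mathbf{y}_j(0)|/c$), you apply Lemma~\ref{lem:cont-dep-incident} with $(\varphi_j,\mathbf{z}_j)$ replaced by $(q_j,\mathbf{y}_j)$ together with Lemma~\ref{lem:cont-dep-q}; before arrival, you bound the single nonvanishing field by a constant times $\tau_{2,1}$ using $q_1(0)=0$, and then use $\tau_{2,1}\le\|\mathbf{y}_1-\mathbf{y}_2\|_{C^0}/(c-c_0)$. The differences are minor: you bound $\tau_{2,1}$ directly at time $t$ by comparing $g_1$ and $g_2$, whereas the paper uses $\tau_{2,1}(\mathbf{x},t)\le\tau_{2,1}(\mathbf{x},t_1(\mathbf{x}))$ and $\tau_{2,2}(\mathbf{x},t_1(\mathbf{x}))=0$. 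You also explicitly check the $C^1$ bound on $q_j$ needed to invoke Lemma~\ref{lem:cont-dep-incident}, which the paper leaves implicit.
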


\begin{proof}
	
	Define the threshold
	\begin{equation*}
		t_1(\mathbf{x}):=\max_{j=1,2}\frac{|\mathbf{x}-\mathbf{y}_j(0)|}{c}.
	\end{equation*}
	(i) Assume that $\mathbf{x}\in\Omega_2$ and $t\geq t_1(\mathbf{x})$. Since the scattered field $u^{\mathrm{sc}}_j$ has the same retarded form as the incident field $u_j$ in Lemma~\ref{lem:cont-dep-incident} but with $(\varphi_j,\mathbf{z}_j)$ replaced by $(q_j,\mathbf{y}_j)$, applying Lemma~\ref{lem:cont-dep-incident} and Lemma~\ref{lem:cont-dep-q} yields the claimed estimate.
	
(ii)	Assume that $\mathbf{x}\in\Omega_2$ and $t< t_1(\mathbf{x})$. Then at least one of $u^{\mathrm{sc}}_1(\mathbf{x},t)$ and $u^{\mathrm{sc}}_2(\mathbf{x},t)$ vanishes. If both of them are zero, the inequality holds trivially; if only one is nonzero, we may assume without loss of generality that $u^{\mathrm{sc}}_1(\mathbf{x},t)\neq 0$ and $u^{\mathrm{sc}}_2(\mathbf{x},t)=0$.
	From the preceding discussions, it follows that
	\begin{align*}
		u^{\mathrm{sc}}_1(\mathbf{x},t)&=\frac{1}{4\pi\,|\mathbf{x}-\mathbf{y}_1(\tau_{2,1})|}\;q_1(\tau_{2,1})\\
		&=\frac{c}{|\mathbf{x}-\mathbf{y}_1(\tau_{2,1})|}\int_0^{\tau_{2,1}} e^{-4\pi c\alpha_1({\tau_{2,1}}-s)}u^{\mathrm{in}}(\mathbf{y}_1(s),s)\,ds.
	\end{align*}
	Hence, the difference of the two solutions can be bounded by
	\begin{align*}
		|u^{\mathrm{sc}}_1(\mathbf{x},t)-u^{\mathrm{sc}}_2(\mathbf{x},t)|&\leq \frac{c}{d_{\mathrm{sc}}}\int_0^{\tau_{2,1}} e^{-4\pi c\alpha_1({\tau_{2,1}}-s)}|u^{\mathrm{in}}(\mathbf{y}_1(s),s)|\,ds\\
		&\leq \frac{cM_\varphi}{4\pi d_{\mathrm{in}}d_{\mathrm{sc}}}\int_0^{\tau_{2,1}} e^{-4\pi c\alpha_1({\tau_{2,1}}-s)}\,ds\\
		&\leq  \frac{cM_\varphi}{4\pi d_{\mathrm{in}}d_{\mathrm{sc}}}
		\frac{1-e^{-4\pi c\alpha_1 \tau_{2,1}}}{4\pi c\alpha_1}\\
		&\leq  \frac{cM_\varphi}{4\pi d_{\mathrm{in}}d_{\mathrm{sc}}}
		\frac{4\pi c\alpha_1 \tau_{2,1}}{4\pi c\alpha_1}\\
		&\leq  \frac{cM_\varphi}{4\pi d_{\mathrm{in}}d_{\mathrm{sc}}}
		\tau_{2,1}(\mathbf{x},t_1(\mathbf{x}))\\
		&=  \frac{cM_\varphi}{4\pi d_{\mathrm{in}}d_{\mathrm{sc}}}
		(\tau_{2,1}(\mathbf{x},t_1(\mathbf{x}))-\tau_{2,2}(\mathbf{x},t_1(\mathbf{x})))\\
		&\leq \frac{cM_\varphi}{4\pi d_{\mathrm{in}}d_{\mathrm{sc}}(c-c_0)}
		\|\mathbf{y}_1-\mathbf{y}_2\|_{C^0},
	\end{align*}
which also proves the estimate.
\end{proof}

Combining Lemma~\ref{lem:cont-dep-incident}, Lemma~\ref{lem:cont-dep-q} and Theorem \ref{thm:cont-dep-sc}  yields the continuous dependence of the scattered field in our model on $(\alpha, \mathbf{y})$.

\begin{corollary}\label{cor:cont-dep-U}
Let the hypotheses of Theorem~\ref{thm:cont-dep-sc} hold, and
let the receiver trajectories satisfy $\mathbf{x}_i\in C^1([0,T];\mathbb{R}^3)$ with $\mathbf{x}_i(t)\in\Omega_2$ for all $t\in[0,T]$, $i=1,\dots,4$.
Define the measured scattered signals
\[
U_{i,j}(t):=u^{\mathrm{sc}}_j(\mathbf{x}_i(t),t),\qquad i=1,\dots,4,\; j=1,2.
\]
Then for every $i=1,\dots,4$ and every $t\in[0,T]$,
\[
|U_{i,1}(t)-U_{i,2}(t)|
\le C_{\mathrm{sc}}\Bigl(|\alpha_1-\alpha_2|+\|\mathbf{y}_1-\mathbf{y}_2\|_{C^0([0,T];\mathbb{R}^3)}\Bigr),
\]
with the same constant $C_{\mathrm{sc}}$ as in Theorem~\ref{thm:cont-dep-sc}.
\end{corollary}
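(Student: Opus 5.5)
The corollary is a pointwise specialization of Theorem~\ref{thm:cont-dep-sc}, so the plan is simply to evaluate that estimate along each receiver trajectory. Fix $i\in\{1,\dots,4\}$ and $t\in[0,T]$, and set $\mathbf{x}:=\mathbf{x}_i(t)$. By hypothesis $\mathbf{x}_i(t)\in\Omega_2$, so $\mathbf{x}$ is an admissible spatial point in Theorem~\ref{thm:cont-dep-sc}. By definition,
\[
U_{i,j}(t)=u^{\mathrm{sc}}_j(\mathbf{x}_i(t),t),\qquad j=1,2,
\]
so $U_{i,1}(t)-U_{i,2}(t)=u^{\mathrm{sc}}_1(\mathbf{x},t)-u^{\mathrm{sc}}_2(\mathbf{x},t)$.

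Next I apply Theorem~\ref{thm:cont-dep-sc} at the pair $(\mathbf{x},t)\in\Omega_2\times[0,T]$. This gives
\[
|U_{i,1}(t)-U_{i,2}(t)|\le C_{\mathrm{sc}}\bigl(|\alpha_1-\alpha_2|+\|\mathbf{y}_1-\mathbf{y}_2\|_{C^0([0,T];\mathbb{R}^3)}\bigr).
\]

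The only point to check is that $C_{\mathrm{sc}}$ does not depend on $\mathbf{x}$ or $t$. Inspecting Theorem~\ref{thm:cont-dep-sc}, it depends only on $c,c_0,m_\alpha,M_\alpha,d_{\mathrm{in}},d_{\mathrm{sc}},\varphi,\mathbf{z},T$. Here $d_{\mathrm{sc}}$ is the infimum taken over all of $\Omega_2$, so the receivers, lying in $\Omega_2$, automatically stay at distance at least $d_{\mathrm{sc}}$ from both scatterer trajectories. Hence the same constant works for every $i$ and every $t$, which is exactly the claim.

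There is no real obstacle. One might worry that the receiver moves with time, so the retarded time is evaluated at a time-dependent point. However, Theorem~\ref{thm:cont-dep-sc} holds for each fixed $(\mathbf{x},t)$, so composing with $\mathbf{x}_i(t)$ is harmless. No regularity of $\mathbf{x}_i$ beyond membership in $\Omega_2$ is used; the $C^1$ assumption is needed only later for $U_i'$.
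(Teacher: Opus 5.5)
Your proposal is correct and is essentially the paper's argument: the paper gives no separate proof and obtains the corollary directly from Theorem~\ref{thm:cont-dep-sc}, by evaluating its estimate at $(\mathbf{x},t)=(\mathbf{x}_i(t),t)\in\Omega_2\times[0,T]$. Because $C_{\mathrm{sc}}$ is uniform over $\Omega_2\times[0,T]$, the same constant serves for every receiver and every time, and you are right that only $\mathbf{x}_i(t)\in\Omega_2$ is used, not the $C^1$ regularity of $\mathbf{x}_i$.
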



\section{Inversion algorithm}\label{nm}
In this section we turn to the inverse problem of recovering the trajectory of a moving point scatterer.   
Define the functions 
\begin{equation} g_{\boldsymbol{z}}(t,s):= t - s - \lvert \boldsymbol{y}(t) - \boldsymbol{z}(s) \rvert /c, 
\quad
 g_{\boldsymbol{x}}(t,s) := t - s - \lvert \boldsymbol{x}(t) - \boldsymbol{y}(s) \rvert /c
 \end{equation}
  on \([0,\infty) \times [0,\infty)\). Here, \( \boldsymbol{y} \) denotes the trajectory of the point-like scatterer, \( \boldsymbol{z} \) denotes the trajectory of the point emitter, and \( \boldsymbol{x} \) denotes the trajectory of an observation point.

\begin{theorem} \label{Theo1}
\( g_{\boldsymbol{z}}(t,s) \) and \( g_{\boldsymbol{x}}(t,s) \) satisfy the following properties:
\begin{enumerate}
    \item They are continuous, strictly increasing in \( t \), and strictly decreasing in \( s \).  
    \item There exist \( T_0>0 \) and \( T_{1,\boldsymbol{x}} >0\) such that \( g_{\boldsymbol{z}}(T_0, 0) = 0 \) and \( g_{\boldsymbol{x}}(T_{1,\boldsymbol{x}}, 0) = 0 \).  
    \item There exist unique strictly increasing, continuously differentiable functions \( h_{\boldsymbol{z}} \) and \( h_{\boldsymbol{x}} \) defined on \([0, \infty)\) and \([0, \infty)\), respectively, with \( h_{\boldsymbol{z}}(0) = T_0 \) and \( h_{\boldsymbol{x}}(0) = T_{1,\boldsymbol{x}} \), such that \( g_{\boldsymbol{z}}(h_{\boldsymbol{z}}(s), s) = 0 \) and \( g_{\boldsymbol{x}}(h_{\boldsymbol{x}}(s), s) = 0 \) hold for all \( s \geq 0 \).
\end{enumerate}
\end{theorem}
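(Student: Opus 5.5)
The plan is to derive all three properties from the subsonic condition, using the same Lipschitz comparison as in Lemma~\ref{rem:tau2_limit}.

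\textbf{Monotonicity.} Continuity is immediate, since $\mathbf{y},\mathbf{z},\mathbf{x}$ are continuous and the Euclidean norm is continuous. For monotonicity in $t$, take $t_2>t_1$. The reverse triangle inequality and the bound $|\dot{\mathbf{y}}|\le c_0$ give
\[
\bigl||\mathbf{y}(t_2)-\mathbf{z}(s)|-|\mathbf{y}(t_1)-\mathbf{z}(s)|\bigr|\le |\mathbf{y}(t_2)-\mathbf{y}(t_1)|\le c_0(t_2-t_1).
\]
Hence
\[
g_{\mathbf{z}}(t_2,s)-g_{\mathbf{z}}(t_1,s)\ge (1-c_0/c)(t_2-t_1)>0 .
\]
The same estimate, now using $|\dot{\mathbf{z}}|\le c_0$, shows $g_{\mathbf{z}}(t,s_1)-g_{\mathbf{z}}(t,s_2)\ge(1-c_0/c)(s_2-s_1)>0$ for $s_2>s_1$, which is strict decrease in $s$. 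For $g_{\mathbf{x}}$ the argument is identical, with $|\dot{\mathbf{x}}|\le c_0$ used for the $t$-dependence and $|\dot{\mathbf{y}}|\le c_0$ for the $s$-dependence.

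\textbf{Existence of $T_0$ and $T_{1,\mathbf{x}}$.} Fix any $s\ge0$ and consider $t\mapsto g(t,s)$. At $t=s$,
\[
g_{\mathbf{z}}(s,s)=-|\mathbf{y}(s)-\mathbf{z}(s)|/c<0,
\]
because the separability condition keeps $\mathbf{y}(s)\in\Omega_1$ and $\mathbf{z}(s)\notin\overline{\Omega_2}$ apart. The same holds for $g_{\mathbf{x}}(s,s)$. Property~(1) shows that $g(t,s)\ge g(s,s)+(1-c_0/c)(t-s)$, which tends to $+\infty$ as $t\to\infty$. By the intermediate value theorem there is a zero, and strict monotonicity makes it unique; call it $h(s)>s$. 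Taking $s=0$ gives $T_0=h_{\mathbf{z}}(0)$ and $T_{1,\mathbf{x}}=h_{\mathbf{x}}(0)$, which proves~(2) and defines $h_{\mathbf{z}},h_{\mathbf{x}}$ uniquely on $[0,\infty)$.

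\textbf{Regularity of $h$ (main step).} This is where some care is needed: $C^1$ smoothness of $h$ comes from the implicit function theorem applied to $g(t,s)=0$. Along the zero set, $|\mathbf{y}(t)-\mathbf{z}(s)|=c(t-s)>0$, since $h(s)>s$. The norm is therefore differentiable there, and $g$ is $C^1$ near each point $(h(s),s)$. The partial derivatives are
\[
\partial_t g_{\mathbf{z}}=1-\frac{(\mathbf{y}(t)-\mathbf{z}(s))\cdot\dot{\mathbf{y}}(t)}{c|\mathbf{y}(t)-\mathbf{z}(s)|}\ge 1-c_0/c>0,
\]
\[
\partial_s g_{\mathbf{z}}=-1+\frac{(\mathbf{y}(t)-\mathbf{z}(s))\cdot\dot{\mathbf{z}}(s)}{c|\mathbf{y}(t)-\mathbf{z}(s)|}\le -(1-c_0/c)<0 .
\]
Since $\partial_t g\neq0$, the implicit function theorem gives a local $C^1$ solution $t=\tilde h(s)$. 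By the uniqueness established above, $\tilde h$ coincides with $h$ near each $s$, so $h$ is $C^1$ on $[0,\infty)$ (one-sided at $s=0$, or by extending the trajectories slightly to negative times). Finally,
\[
h'(s)=-\partial_s g/\partial_t g\ge \frac{c-c_0}{c+c_0}>0,
\]
which gives strict increase and completes~(3). The argument for $h_{\mathbf{x}}$ is identical.
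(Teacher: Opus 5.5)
Your proof is correct and follows essentially the same route as the paper. The subsonic condition gives monotonicity; you use a reverse-triangle Lipschitz estimate where the paper differentiates, which amounts to the same thing here. The intermediate value theorem then gives the root, and the implicit function theorem with $\partial_t g\ge 1-c_0/c>0$ gives $C^1$ regularity and $h'>0$.

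Your version is slightly more complete. You produce the root $h(s)$ for every $s\ge 0$ by the intermediate value theorem, then use the implicit function theorem only locally and patch the pieces together by uniqueness. The paper applies the implicit function theorem only at $(T_0,0)$, which on its own yields $h$ near $s=0$ rather than on all of $[0,\infty)$.
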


\begin{proof}
The continuity of \( g_{\boldsymbol{z}} \) and \( g_{\boldsymbol{x}} \) follows from the continuity of the trajectories. Moreover, since
\begin{align*}
\frac{\partial g_{\boldsymbol{z}}}{\partial t}(t,s) = 1-\frac{\boldsymbol{y}'(t) \cdot (\boldsymbol{y}(t) - \boldsymbol{z}(s))}{c|\boldsymbol{y}(t) - \boldsymbol{z}(s)|}\geq 1-c_0/c >0, \\  
\frac{\partial g_{\boldsymbol{z}}}{\partial s}(t,s) =\frac{\boldsymbol{z}'(s) \cdot (\boldsymbol{y}(t) - \boldsymbol{z}(s))}{c|\boldsymbol{y}(t) - \boldsymbol{z}(s)|}-1\leq c_0/c-1 <0,
\end{align*}
\( g_{\boldsymbol{z}} \) is strictly increasing in \( t \) and strictly decreasing in \( s \). Note that \( g_{\boldsymbol{z}}(0,0) < 0 \) and \(\lim_{t \to \infty} g_{\boldsymbol{z}}(t,0) = \infty\), thus, there exists a unique \( T_0 > 0 \) such that \( g_{\boldsymbol{z}}(T_0,0) = 0 \). By the Implicit Function Theorem, there exists a unique continuously differentiable function \( h_{\boldsymbol{z}}: [0, \infty) \to [T_0, \infty) \) such that \( h_{\boldsymbol{z}}(0) = T_0 \) and \( g_{\boldsymbol{z}}(h_{\boldsymbol{z}}(s), s) = 0 \), with  
\[
h'_{\boldsymbol{z}}(s) = \left(-\frac{\partial g_{\boldsymbol{z}}}{\partial s} \middle/ \frac{\partial g_{\boldsymbol{z}}}{\partial t}\right)\Bigg |_{(t,s)=(h_{\boldsymbol{z}}(s),s)} > 0, \quad \forall s \geq 0.
\]
Therefore, \( h_{\boldsymbol{z}} \) is a strictly increasing function.

The properties of \( g_{\boldsymbol{x}} \) can be verified in a similar manner.
\end{proof}

Since \( h_{\boldsymbol{z}} \) and \( h_{\boldsymbol{x}} \) are strictly increasing and continuously differentiable, each possesses a well-defined inverse function: \( h_{\boldsymbol{z}}^{-1} : [T_0, \infty) \to [0, \infty) \) and \( h_{\boldsymbol{x}}^{-1} : [T_{1,\boldsymbol{x}}, \infty) \to [0, \infty) \), where \( T_0>0 \) and \( T_{1,\boldsymbol{x}} >0\) are specified in the second assertion of Theorem \ref{Theo1}.
Let \( T_{2,\boldsymbol{x}} := h_{\boldsymbol{x}}(T_0) > h_{\boldsymbol{x}}(0) = T_{1,\boldsymbol{x}}\). 
 According to Theorem \ref{Theo1}, we can obtain the following two corollaries.

\begin{corollary}\label{Cor1}
For \( t \geq T_0 \), the following equation with respect to \(\tau\):
\[
t - \tau - |\boldsymbol{z}(\tau) - \boldsymbol{y}(t)|/c = 0,
\]
admits a unique solution \(\tau = h^{-1}_{\boldsymbol{z}}(t)\) in \( [0, \infty) \).
\end{corollary}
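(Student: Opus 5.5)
The equation in question is exactly $g_{\boldsymbol{z}}(t,\tau)=0$, with $t$ now playing the role of the scattering time and $\tau$ the unknown emission time. The plan is to read off existence from the function $h_{\boldsymbol{z}}$ of Theorem~\ref{Theo1}(3), and uniqueness from the strict monotonicity in Theorem~\ref{Theo1}(1).

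\emph{Existence.} By Theorem~\ref{Theo1}(3), $h_{\boldsymbol{z}}:[0,\infty)\to[T_0,\infty)$ is continuous and strictly increasing with $h_{\boldsymbol{z}}(0)=T_0$. To invert it on all of $[T_0,\infty)$ we also need surjectivity, i.e. $h_{\boldsymbol{z}}(s)\to\infty$ as $s\to\infty$. This holds because $g_{\boldsymbol{z}}(h_{\boldsymbol{z}}(s),s)=0$ gives
\[
h_{\boldsymbol{z}}(s)=s+|\boldsymbol{y}(h_{\boldsymbol{z}}(s))-\boldsymbol{z}(s)|/c\ge s .
\]
By the intermediate value theorem, $h_{\boldsymbol{z}}$ is then a bijection onto $[T_0,\infty)$. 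Hence for every $t\ge T_0$, the value $\tau:=h_{\boldsymbol{z}}^{-1}(t)\in[0,\infty)$ satisfies $g_{\boldsymbol{z}}(t,\tau)=0$, which is the stated equation.

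\emph{Uniqueness.} Fix $t$. By Theorem~\ref{Theo1}(1), $s\mapsto g_{\boldsymbol{z}}(t,s)$ is strictly decreasing, since its $s$-derivative is at most $c_0/c-1<0$. Therefore it has at most one zero in $[0,\infty)$. Note that this monotonicity in $s$ is the property actually used, rather than the $t$-monotonicity that underlies the implicit function construction.

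\emph{Remark on $t<T_0$.} No extra work is needed, but one can note why solutions fail to exist there. Since $g_{\boldsymbol{z}}(t,0)<g_{\boldsymbol{z}}(T_0,0)=0$ and $g_{\boldsymbol{z}}$ is decreasing in $s$, there is no root with $s\ge0$.

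The only step needing care is surjectivity of $h_{\boldsymbol{z}}$, which the bound $h_{\boldsymbol{z}}(s)\ge s$ settles.
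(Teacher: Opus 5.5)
Your proof is correct and follows the paper's argument. Existence comes from substituting $s=h_{\boldsymbol{z}}^{-1}(t)$ into $g_{\boldsymbol{z}}(h_{\boldsymbol{z}}(s),s)=0$, uniqueness from the strict decrease of $g_{\boldsymbol{z}}$ in its second argument, and your bound $h_{\boldsymbol{z}}(s)\ge s$ supplies the surjectivity onto $[T_0,\infty)$ that the paper assumes without proof.
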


\begin{proof}
By Theorem \ref{Theo1}, for any \( s \geq 0 \), it holds that
\[
h_{\boldsymbol{z}}(s) - s - \lvert \boldsymbol{y}(h_{\boldsymbol{z}}(s)) - \boldsymbol{z}(s) \rvert /c
= g_{\boldsymbol{z}}(h_{\boldsymbol{z}}(s), s) = 0.
\]
Substituting \( s = h_{\boldsymbol{z}}^{-1}(t) \) with \( t \geq T_0 \), we obtain
\[
t - h^{-1}_{\boldsymbol{z}}(t) - \lvert \boldsymbol{z}(h^{-1}_{\boldsymbol{z}}(t)) - \boldsymbol{y}(t) \rvert /c= 0.
\]
To prove uniqueness, suppose that there exists \( \tau_1 \geq 0 \) such that
\[
t - \tau_1 - \lvert \boldsymbol{z}(\tau_1) - \boldsymbol{y}(t) \rvert /c= 0,
\]
that is, \( g_{\boldsymbol{z}}(t, \tau_1) = 0 \).
By Theorem \ref{Theo1}, the function \( g_{\boldsymbol{z}} \) is strictly decreasing with respect to its second argument. 
Since \( g_{\boldsymbol{z}}(t, h^{-1}_{\boldsymbol{z}}(t)) = 0 \), it follows that
\[
\tau_1 = h^{-1}_{\boldsymbol{z}}(t).
\]
This proves the uniqueness.
\end{proof}

\begin{corollary}\label{Cor2}
\(\boldsymbol{y}(T_0)\) satisfies the following equality:
\[
|\boldsymbol{x}(T_{2,\boldsymbol{x}})-\boldsymbol{y}(T_0)|+|\boldsymbol{y}(T_0)-\boldsymbol{z}(0)|-c\,T_{2,\boldsymbol{x}} = 0
\]
\end{corollary}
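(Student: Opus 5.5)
The identity follows by adding the two retarded-time relations that define $T_0$ and $T_{2,\boldsymbol{x}}$. Both relations come from the zero sets of $g_{\boldsymbol{z}}$ and $g_{\boldsymbol{x}}$ in Theorem \ref{Theo1}.

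\textbf{Step 1 (emission leg).} By the second assertion of Theorem \ref{Theo1}, $g_{\boldsymbol{z}}(T_0,0)=0$. By the definition of $g_{\boldsymbol{z}}$ this reads
\[
T_0 - \lvert \boldsymbol{y}(T_0)-\boldsymbol{z}(0)\rvert/c = 0,
\]
that is, $\lvert \boldsymbol{y}(T_0)-\boldsymbol{z}(0)\rvert = cT_0$. Physically, $T_0$ is the time at which the wave emitted at time $0$ first reaches the scatterer.

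\textbf{Step 2 (observation leg).} By definition $T_{2,\boldsymbol{x}}=h_{\boldsymbol{x}}(T_0)$. The third assertion of Theorem \ref{Theo1}, applied with $s=T_0\ge 0$, gives $g_{\boldsymbol{x}}(h_{\boldsymbol{x}}(T_0),T_0)=0$, i.e.
\[
T_{2,\boldsymbol{x}} - T_0 - \lvert \boldsymbol{x}(T_{2,\boldsymbol{x}})-\boldsymbol{y}(T_0)\rvert/c = 0,
\]
so $\lvert \boldsymbol{x}(T_{2,\boldsymbol{x}})-\boldsymbol{y}(T_0)\rvert = c(T_{2,\boldsymbol{x}}-T_0)$.

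\textbf{Step 3 (combine).} Adding the identities from Steps 1 and 2 cancels $cT_0$ and leaves
\[
\lvert \boldsymbol{x}(T_{2,\boldsymbol{x}})-\boldsymbol{y}(T_0)\rvert+\lvert \boldsymbol{y}(T_0)-\boldsymbol{z}(0)\rvert = cT_{2,\boldsymbol{x}},
\]
which is the claimed equality.

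There is no real obstacle here. The only point needing care is that $h_{\boldsymbol{x}}$ is defined on all of $[0,\infty)$, so that evaluating it at $s=T_0>0$ is legitimate; Theorem \ref{Theo1} provides exactly this. One should also keep straight the argument order in $g_{\boldsymbol{x}}(t,s)$: the first argument is the observation time at $\boldsymbol{x}$ and the second is the scattering time at $\boldsymbol{y}$.
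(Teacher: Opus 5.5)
Your proposal is correct and follows essentially the same route as the paper. Both arguments combine $g_{\boldsymbol{z}}(T_0,0)=0$ with $g_{\boldsymbol{x}}(h_{\boldsymbol{x}}(T_0),T_0)=0$, where $T_{2,\boldsymbol{x}}=h_{\boldsymbol{x}}(T_0)$, and then cancel the $cT_0$ terms to get the stated identity.
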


\begin{proof}
    Since \(T_{2,\boldsymbol{x}} = h_{\boldsymbol{x}}(T_0)\) and
    \[
        h_{\boldsymbol{x}}(T_0) - T_0 - |\boldsymbol{x}(h_{\boldsymbol{x}}(T_0))-\boldsymbol{y}(T_0)|/c = g_{\boldsymbol{x}}(h_{\boldsymbol{x}}(T_0),\,T_0) = 0,
    \]
    combining the definition of \(T_{2,\boldsymbol{x}}\) with Theorem \ref{Theo1} yields
    \begin{align*}
         & |\boldsymbol{x}(T_{2,\boldsymbol{x}})-\boldsymbol{y}(T_0)| + |\boldsymbol{y}(T_0)-\boldsymbol{z}(0)| - c\, T_{2,\boldsymbol{x}}\\
        =& |\boldsymbol{x}(h_{\boldsymbol{x}}(T_0))-\boldsymbol{y}(T_0)| + |\boldsymbol{y}(T_0)-\boldsymbol{z}(0)| - c\,h_{\boldsymbol{x}}(T_0) \\
        =& |\boldsymbol{x}(h_{\boldsymbol{x}}(T_0))-\boldsymbol{y}(T_0)| + |\boldsymbol{y}(T_0)-\boldsymbol{z}(0)| - c\,T_0 - |\boldsymbol{x}(h_{\boldsymbol{x}}(T_0))-\boldsymbol{y}(T_0)| \\
        =& |\boldsymbol{y}(T_0)-\boldsymbol{z}(0)| - c\,T_0 \\
        =& c\,g_{\boldsymbol{z}}(T_0,\,0) \\
        =& 0.
    \end{align*}
\end{proof}
To proceed, we define two distance functions 
\[
d_{\boldsymbol{x}}(t) := |\boldsymbol{x}(h_{\boldsymbol{x}}(t)) - \boldsymbol{y}(t)| ,\quad t\geq 0,
\] 
and 
\[
d_{\boldsymbol{z}}(t) := |\boldsymbol{y}(t) - \boldsymbol{z}(h_{\boldsymbol{z}}^{-1}(t))| , \quad  t\geq T_0.
\]
These quantities admit clear physical interpretations: 
\(d_{\boldsymbol{x}}(t)\) is the distance between the scatterer at the moment \(t\) and the receiver evaluated at the corresponding arrival time \(h_{\boldsymbol{x}}(t)\), 
while \(d_{\boldsymbol{z}}(t)\) is the distance between the scatterer at time \(t\) and the emitter evaluated at the corresponding emission time \(h_{\boldsymbol{z}}^{-1}(t)\). 
Below we prove that the first derivative of 
$d_{\boldsymbol{x}}$ can be expressed in terms of $d_{\boldsymbol{x}}$, $d_{\boldsymbol{z}}$ and the measurement data.

\begin{theorem}\label{Theo2}
The function \( d_{\boldsymbol{x}} \) satisfies the following ordinary equation:
\begin{equation}\label{ode}
d_{\boldsymbol{x}}'(t) = 
\frac{
  \dfrac{c\,\varphi(t - d_{\boldsymbol{z}}(t)/c)}{4\pi\, d_{\boldsymbol{z}}(t)} 
  + \bigl(c - 4\pi c \alpha \, d_{\boldsymbol{x}}(t)\bigr) U_{\boldsymbol{x}}\!\left(\frac{d_{\boldsymbol{x}}(t)}{c} + t\right)
}{
  U_{\boldsymbol{x}}\!\left(\frac{d_{\boldsymbol{x}}(t)}{c} + t\right) 
  + \frac{d_{\boldsymbol{x}}(t)}{c} U_{\boldsymbol{x}}'\left(\frac{d_{\boldsymbol{x}}(t)}{c} + t\right)
} - c, \qquad t \geq T_0.
\end{equation}
\end{theorem}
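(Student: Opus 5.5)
Everything follows from evaluating the measured signal at the arrival time and differentiating. Write $U_{\boldsymbol{x}}(t):=u^{\mathrm{sc}}(\boldsymbol{x}(t),t)$. By the definition of $h_{\boldsymbol{x}}$ in Theorem~\ref{Theo1}, for every $s\ge 0$ we have $h_{\boldsymbol{x}}(s)-s=d_{\boldsymbol{x}}(s)/c$. Hence the retarded time satisfies $\tau_2(\boldsymbol{x}(h_{\boldsymbol{x}}(s)),h_{\boldsymbol{x}}(s))=s$, with uniqueness coming from the subsonic condition. For $s>0$ the Heaviside factor in \eqref{sc} equals one, so \eqref{sc} gives the key identity
\[
U_{\boldsymbol{x}}\bigl(t+d_{\boldsymbol{x}}(t)/c\bigr)=\frac{q(t)}{4\pi d_{\boldsymbol{x}}(t)},\qquad\text{i.e.}\qquad q(t)=4\pi d_{\boldsymbol{x}}(t)\,U_{\boldsymbol{x}}\bigl(h_{\boldsymbol{x}}(t)\bigr).
\]
This expresses the unknown intensity through the data and the unknown distance $d_{\boldsymbol{x}}$.

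Next, differentiate this identity in $t$, which is legitimate since $h_{\boldsymbol{x}}\in C^1$ and $q\in C^1$. This gives
\[
q'(t)=4\pi d_{\boldsymbol{x}}'(t)U_{\boldsymbol{x}}(h_{\boldsymbol{x}}(t))+4\pi d_{\boldsymbol{x}}(t)U_{\boldsymbol{x}}'(h_{\boldsymbol{x}}(t))\bigl(1+d_{\boldsymbol{x}}'(t)/c\bigr).
\]
Then substitute the ODE $q'=-4\pi c\alpha q+4\pi c\,u^{\mathrm{in}}(\boldsymbol{y}(t),t)$ from Theorem~\ref{thm:existence}, together with the expression for $q$ obtained above.

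For the incident term, take $t\ge T_0$. By Corollary~\ref{Cor1} the retarded time $\tau_1(\boldsymbol{y}(t),t)$ equals $h_{\boldsymbol{z}}^{-1}(t)=t-d_{\boldsymbol{z}}(t)/c$, and $t>|\boldsymbol{y}(t)-\boldsymbol{z}(0)|/c$, so formula \eqref{in} yields
\[
u^{\mathrm{in}}(\boldsymbol{y}(t),t)=\frac{\varphi\bigl(t-d_{\boldsymbol{z}}(t)/c\bigr)}{4\pi d_{\boldsymbol{z}}(t)}.
\]
The boundary case $t=T_0$ should be handled by continuity.

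Collecting the terms containing $d_{\boldsymbol{x}}'$ gives
\[
d_{\boldsymbol{x}}'\bigl(U_{\boldsymbol{x}}+\tfrac{d_{\boldsymbol{x}}}{c}U_{\boldsymbol{x}}'\bigr)=\frac{c\varphi}{4\pi d_{\boldsymbol{z}}}-c\alpha\cdot 4\pi d_{\boldsymbol{x}}U_{\boldsymbol{x}}-d_{\boldsymbol{x}}U_{\boldsymbol{x}}'.
\]
To reach the stated form \eqref{ode}, add and subtract $c\,(U_{\boldsymbol{x}}+\tfrac{d_{\boldsymbol{x}}}{c}U_{\boldsymbol{x}}')$ on the right, using $-d_{\boldsymbol{x}}U_{\boldsymbol{x}}'=cU_{\boldsymbol{x}}-c(U_{\boldsymbol{x}}+\tfrac{d_{\boldsymbol{x}}}{c}U_{\boldsymbol{x}}')$. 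Dividing by the bracket then produces exactly the numerator $\frac{c\varphi}{4\pi d_{\boldsymbol{z}}}+(c-4\pi c\alpha d_{\boldsymbol{x}})U_{\boldsymbol{x}}$ minus $c$, as in \eqref{ode}.

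\textbf{Main obstacle.} The delicate step is justifying the division, i.e. showing that
\[
U_{\boldsymbol{x}}+\frac{d_{\boldsymbol{x}}}{c}U_{\boldsymbol{x}}'\neq 0
\]
at $h_{\boldsymbol{x}}(t)$. Rearranging the differentiated identity shows this bracket equals $\frac{q'(t)}{4\pi(c+d_{\boldsymbol{x}}')}+\frac{q(t)\,d_{\boldsymbol{x}}'}{4\pi d_{\boldsymbol{x}}(c+d_{\boldsymbol{x}}')}\cdot(\ldots)$. More simply, it equals $\frac{d}{dt}\bigl[q/(4\pi)\bigr]$ rescaled by the Jacobian $h_{\boldsymbol{x}}'=1+d_{\boldsymbol{x}}'/c>0$, up to the $d_{\boldsymbol{x}}'U_{\boldsymbol{x}}$ term. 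I would try to deduce nonvanishing from the positivity $\varphi>0$, which forces $u^{\mathrm{in}}>0$ and hence $q>0$ via \eqref{scc}. If this cannot be established in general, I would state it as a standing nondegeneracy assumption implicit in \eqref{ode}.

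A second technical point is differentiability of $U_{\boldsymbol{x}}$ at $h_{\boldsymbol{x}}(t)$. I would obtain it from the $C^1$ regularity of $q$ together with the implicit function theorem for $\tau_2$ along the receiver path, using $|\dot{\boldsymbol{x}}|<c_0$.
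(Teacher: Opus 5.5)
Your argument is correct and is essentially the paper's proof. The paper's identity $\frac{4\pi}{c}d_{\boldsymbol{x}}(t)U_{\boldsymbol{x}}(t+d_{\boldsymbol{x}}(t)/c)=\int_{T_0}^{t}e^{-4\pi c\alpha(t-s)}\varphi(s-d_{\boldsymbol{z}}(s)/c)/d_{\boldsymbol{z}}(s)\,ds$ is exactly your $q(t)=4\pi d_{\boldsymbol{x}}(t)U_{\boldsymbol{x}}(h_{\boldsymbol{x}}(t))$ with $q$ written in Duhamel form, so differentiating it is the same as invoking the ODE for $q$.

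The paper divides by $U_{\boldsymbol{x}}+\frac{d_{\boldsymbol{x}}}{c}U_{\boldsymbol{x}}'$ without comment, so your nondegeneracy caveat is extra care rather than a gap. However, $\varphi>0$ alone cannot supply it: this bracket equals $\frac{c\,q+d_{\boldsymbol{x}}q'}{4\pi d_{\boldsymbol{x}}(c+d_{\boldsymbol{x}}')}$, which can vanish when $q$ decays fast enough, so it must indeed be stated as an assumption.
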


\begin{proof}
Using the definitions of \(g_{\boldsymbol{z}}\), \(g_{\boldsymbol{x}}\), \(h_{\boldsymbol{z}}\) and \(h_{\boldsymbol{x}}\), we can further simplify the expression for \(U_{\boldsymbol{x}}\):
\[
    U_{\boldsymbol{x}}(t) = \frac{c\,H(g_{\boldsymbol{x}}(t,0))}{|\boldsymbol{x}(t) - \boldsymbol{y}(h_{\boldsymbol{x}}^{-1}(t))|} 
    \int_{0}^{h_{\boldsymbol{x}}^{-1}(t)}H(g_{\boldsymbol{z}}(s,0))\frac{ e^{-4\pi c\alpha (h_{\boldsymbol{x}}^{-1}(t)- s)}\varphi(h_{\boldsymbol{z}}^{-1}(s))}{ 4\pi|\boldsymbol{y}(s)-\boldsymbol{z}(h_{\boldsymbol{z}}^{-1}(s))|} \, ds.
\]
From Theorem \ref{Theo1}, we know that \( g_{\boldsymbol{z}}(s,0) \ge 0 \) if and only if \( s \ge T_0 \). Consequently, the Heaviside step function within the integrand can be taken outside the integral, i.e.,
\[
U_{\boldsymbol{x}}(t) = \frac{c\,H(g_{\boldsymbol{x}}(t,0)) \, H(h_{\boldsymbol{x}}^{-1}(t)-T_0)}{4\pi|\boldsymbol{x}(t) - \boldsymbol{y}(h_{\boldsymbol{x}}^{-1}(t))|} 
\int_{T_0}^{h_{\boldsymbol{x}}^{-1}(t)}
\frac{ e^{-4\pi c\alpha (h_{\boldsymbol{x}}^{-1}(t) - s)} \, \varphi(h_{\boldsymbol{z}}^{-1}(s))}{ |\boldsymbol{y}(s)-\boldsymbol{z}(h_{\boldsymbol{z}}^{-1}(s))|} \, ds.
\]
Again using Theorem \ref{Theo1}, we see \( g_{\boldsymbol{x}}(t,0) \ge 0 \) if and only if \( t \ge T_{1,\boldsymbol{x}} \), and that \( h_{\boldsymbol{x}}^{-1}(t) \ge T_0 \) if and only if \( t \ge h_{\boldsymbol{x}}(T_0) = T_{2,\boldsymbol{x}} > T_{1,\boldsymbol{x}} \). It then follows that
\begin{equation}\label{DefOfU1}
  U_{\boldsymbol{x}}(t) = \frac{c\,H(t-T_{2,\boldsymbol{x}})}{4\pi|\boldsymbol{x}(t) - \boldsymbol{y}(h_{\boldsymbol{x}}^{-1}(t))|} 
    \int_{T_0}^{h_{\boldsymbol{x}}^{-1}(t)}
    \frac{ e^{-4\pi c\alpha (h_{\boldsymbol{x}}^{-1}(t) - s)} \, \varphi(h_{\boldsymbol{z}}^{-1}(s))}{ |\boldsymbol{y}(s)-\boldsymbol{z}(h_{\boldsymbol{z}}^{-1}(s))|} \, ds.  
\end{equation}
When \(t \geq T_{2,\boldsymbol{x}}\), the distance \(|\boldsymbol{x}(t) - \boldsymbol{y}(h_{\boldsymbol{x}}^{-1}(t))|\) and the integrand are always positive. Hence,
\begin{equation}\label{T2x}
    T_{2,\boldsymbol{x}} = \inf_{t>0} \{ t : U_{\boldsymbol{x}}(t) \neq 0 \}.
\end{equation}
Substituting \(t \rightarrow h_{\boldsymbol{x}}(t)\) \((t \geq T_0)\) into \eqref{DefOfU1}, we further obtain:
\begin{equation}\label{DefOfU2}
    U_{\boldsymbol{x}}(h_{\boldsymbol{x}}(t)) = \frac{c}{4\pi|\boldsymbol{x}(h_{\boldsymbol{x}}(t)) - \boldsymbol{y}(t)|} \int_{T_0}^{t}\frac{ e^{-4\pi c\alpha (t - s)}\varphi(h_{\boldsymbol{z}}^{-1}(s))}{ |\boldsymbol{y}(s)-\boldsymbol{z}(h_{\boldsymbol{z}}^{-1}(s))|} \, ds,
\end{equation}
where \(H(h_{\boldsymbol{x}}(t)-T_{2,\boldsymbol{x}}) \equiv 1 \) because \(t \geq T_0 = h_{\boldsymbol{x}}^{-1}(T_{2,\boldsymbol{x}})\) and \(h_{\boldsymbol{x}}\) is strictly increasing.

From \( g_{\boldsymbol{z}}(t, h_{\boldsymbol{z}}^{-1}(t)) = 0 \) and \( g_{\boldsymbol{x}}(h_{\boldsymbol{x}}(t), t) = 0 \), it follows respectively that
\begin{align}\label{dx}
    h_{\boldsymbol{x}}(t) = t+ |\boldsymbol{x}(h_{\boldsymbol{x}}(t)) - \boldsymbol{y}(t)|/c = t + d_{\boldsymbol{x}}(t)/c,\\
    h_{\boldsymbol{z}}^{-1}(t) = t - | \boldsymbol{y}(t) - \boldsymbol{z}(h_{\boldsymbol{z}}^{-1}(t))|/c =  t - d_{\boldsymbol{z}}(t)/c. \label{dz&hz}
\end{align}
Therefore, equation \eqref{DefOfU2} can be rewritten as
\begin{equation*}
    \frac{4\pi}{c}\,d_{\boldsymbol{x}}(t) U_{\boldsymbol{x}}(t+d_{\boldsymbol{x}}(t)/c) = \int_{T_0}^{t}\frac{ e^{-4\pi c\alpha (t - s)}\varphi( s - d_{\boldsymbol{z}}(s) /c)}{ d_{\boldsymbol{z}}(s)} \, ds , \quad t\geq T_0.
\end{equation*}
Differentiating both sides with respect to time \( t \), we obtain:
\[
\begin{aligned}
 & \frac{4\pi}{c}\,d_{\boldsymbol{x}}'(t) U_{\boldsymbol{x}}\left(\frac{d_{\boldsymbol{x}}(t)}{c} +t\right) + \frac{4\pi}{c}\, d_{\boldsymbol{x}}(t)\left(1+\frac{d_{\boldsymbol{x}}'(t)}{c}\right) U_{\boldsymbol{x}}'\left(\frac{d_{\boldsymbol{x}}(t)}{c} +t\right) \\
&= \frac{\varphi(t - d_{\boldsymbol{z}}(t)/c)}{d_{\boldsymbol{z}}(t)} -4\pi c \alpha\int_{T_0}^{t} \frac{e^{-4\pi c\alpha (t - s)} \varphi(s - d_{\boldsymbol{z}}(s)/c)}{d_{\boldsymbol{z}}(s)} \, ds \\
&= \frac{\varphi(t - d_{\boldsymbol{z}}(t)/c)}{d_{\boldsymbol{z}}(t)} - (4\pi)^2 \alpha \, d_{\boldsymbol{x}}(t) U_{\boldsymbol{x}}\left (\frac{d_{\boldsymbol{x}}(t)}{c} +t\right ).
\end{aligned}
\]
This yields the equation
\[
d_{\boldsymbol{x}}'(t) = 
\frac{
  \dfrac{c\,\varphi(t - d_{\boldsymbol{z}}(t)/c)}{4\pi\, d_{\boldsymbol{z}}(t)} 
  + \bigl(c - 4\pi c \alpha \, d_{\boldsymbol{x}}(t)\bigr) U_{\boldsymbol{x}}\!\left(\frac{d_{\boldsymbol{x}}(t)}{c} + t\right)
}{
  U_{\boldsymbol{x}}\!\left(\frac{d_{\boldsymbol{x}}(t)}{c} + t\right) 
  + \frac{d_{\boldsymbol{x}}(t)}{c} U_{\boldsymbol{x}}'\left(\frac{d_{\boldsymbol{x}}(t)}{c} + t\right)
} - c, \qquad t \geq T_0.
\]

\end{proof}

Unfortunately, the distant function $d_{\boldsymbol{z}}$ on the right hand side of \eqref{ode} cannot be obtained directly from the measurement data. Thus, the equation \eqref{ode} cannot be used for inversion.
In what follows, let \( \boldsymbol{x} = \boldsymbol{x}_i \) for \( i = 1,2,3,4 \). The theorem below shows that the functions \( \boldsymbol{y} \) and \( d_{\boldsymbol{z}} \) are uniquely determined by the four distance functions
\[
d_{\boldsymbol{x}_1}, \, d_{\boldsymbol{x}_2}, \, d_{\boldsymbol{x}_3}, \, d_{\boldsymbol{x}_4}.
\]
By \eqref{dx}, each of them satisfies the relation
\begin{equation}\label{dx2}
d_{\boldsymbol{x}_i}(t) = \lvert \boldsymbol{x}_i(t + d_{\boldsymbol{x}_i}(t)/c) - \boldsymbol{y}(t) \rvert.
\end{equation}

\begin{theorem}\label{Theo3}
There exist mappings \( \mathcal{G} \) and \( \mathcal{F} \) such that
\[
\boldsymbol{y}(t) = \mathcal{G}[d_{\boldsymbol{x}_1}, d_{\boldsymbol{x}_2}, d_{\boldsymbol{x}_3}, d_{\boldsymbol{x}_4}](t) \quad \text{for all } t \geq 0,
\]
and
\[
d_{\boldsymbol{z}}(t) = \mathcal{F}[d_{\boldsymbol{x}_1}, d_{\boldsymbol{x}_2}, d_{\boldsymbol{x}_3}, d_{\boldsymbol{x}_4}](t) \quad \text{for all } t \geq T_0.
\]
\end{theorem}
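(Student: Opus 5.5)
For fixed $t\ge 0$, I would write $\boldsymbol{p}_i(t):=\boldsymbol{x}_i(t+d_{\boldsymbol{x}_i}(t)/c)$ for $i=1,\dots,4$. Since the receiver trajectories are known, each $\boldsymbol{p}_i(t)$ is a known function of $d_{\boldsymbol{x}_i}(t)$. By \eqref{dx2}, the unknown point $\boldsymbol{y}(t)$ lies on the four spheres
\[
|\boldsymbol{p}_i(t)-\boldsymbol{y}(t)|^2=d_{\boldsymbol{x}_i}(t)^2,\qquad i=1,2,3,4.
\]
The plan is to remove the quadratic term $|\boldsymbol{y}(t)|^2$ by subtracting the first equation from the other three. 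This leaves a $3\times 3$ linear system
\[
2\bigl(\boldsymbol{p}_i(t)-\boldsymbol{p}_1(t)\bigr)\cdot\boldsymbol{y}(t)=|\boldsymbol{p}_i(t)|^2-|\boldsymbol{p}_1(t)|^2-d_{\boldsymbol{x}_i}(t)^2+d_{\boldsymbol{x}_1}(t)^2,\qquad i=2,3,4,
\]
whose matrix is $A(t)$, with rows $\boldsymbol{p}_i(t)-\boldsymbol{p}_1(t)$.

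The key step is showing that $A(t)$ is invertible for every $t$. The points $\boldsymbol{p}_i(t)$ are $\boldsymbol{x}_i(t_i)$ evaluated at the positive times $t_i=h_{\boldsymbol{x}_i}(t)\ge T_{1,\boldsymbol{x}_i}>0$, using Theorem~\ref{Theo1}. Assumption~(2) says these four points are not coplanar, so the vectors $\boldsymbol{p}_i-\boldsymbol{p}_1$, $i=2,3,4$, are linearly independent and $\det A(t)\neq0$. We can then define
\[
\mathcal{G}[d_{\boldsymbol{x}_1},\dots,d_{\boldsymbol{x}_4}](t):=A(t)^{-1}\boldsymbol{b}(t),
\]
where $\boldsymbol{b}(t)$ is the right-hand side above. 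It depends only on $t$, the known $\boldsymbol{x}_i$, and the values $d_{\boldsymbol{x}_i}(t)$. The true $\boldsymbol{y}(t)$ satisfies the linear system and the solution is unique, so $\boldsymbol{y}=\mathcal{G}[\cdots]$ for all $t\ge0$.

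For $\mathcal{F}$, I would use \eqref{dz&hz}: $d_{\boldsymbol{z}}(t)=|\boldsymbol{y}(t)-\boldsymbol{z}(t-d_{\boldsymbol{z}}(t)/c)|$ for $t\ge T_0$. With $\boldsymbol{y}(t)$ already known from $\mathcal{G}$, this is a scalar equation for $d=d_{\boldsymbol{z}}(t)$. Setting $\tau=t-d/c$, it is exactly $g_{\boldsymbol{z}}(t,\tau)=0$. By Corollary~\ref{Cor1}, for $t\ge T_0$ this has the unique solution $\tau=h_{\boldsymbol{z}}^{-1}(t)\in[0,\infty)$, and the argument there only uses strict monotonicity in the second variable, which needs $\boldsymbol{y}(t)$ and the known $\boldsymbol{z}$. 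So I would define
\[
\mathcal{F}[d_{\boldsymbol{x}_1},\dots,d_{\boldsymbol{x}_4}](t):=c\,(t-\tau^*(t)),
\]
where $\tau^*(t)$ is the unique root in $[0,t]$ of $\tau\mapsto t-\tau-|\mathcal{G}[\cdots](t)-\boldsymbol{z}(\tau)|/c$. This map is strictly decreasing, by the derivative bound $\le c_0/c-1<0$. At $\tau=0$ it equals $g_{\boldsymbol{z}}(t,0)\ge0$ since $t\ge T_0$, and at $\tau=t$ it is $\le 0$. The intermediate value theorem then gives existence and uniqueness, so $\mathcal{F}$ is well defined and returns $d_{\boldsymbol{z}}(t)$.

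The main obstacle is the invertibility of $A(t)$. Its rows are receiver positions evaluated at four different, unknown arrival times $h_{\boldsymbol{x}_i}(t)$, which is precisely why assumption~(2) is stated for arbitrary independent positive times $t_1,\dots,t_4$. I would check carefully that these times are strictly positive for all $t\ge0$, which holds because $h_{\boldsymbol{x}_i}$ is increasing with $h_{\boldsymbol{x}_i}(0)>0$. Continuity or regularity of $\mathcal{G}$ and $\mathcal{F}$ is not asked for, but it would follow from continuity of $A^{-1}$ and the implicit function theorem if needed.
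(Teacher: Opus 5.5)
Your argument is correct and takes essentially the same approach as the paper's proof. You subtract the squared-distance identities to get a $3\times 3$ linear system, which is invertible by the non-coplanarity assumption applied at the arrival times $h_{\boldsymbol{x}_i}(t)\ge T_{1,\boldsymbol{x}_i}>0$, and you then recover $d_{\boldsymbol{z}}(t)$ as the unique root of the retarded-time equation via Corollary~\ref{Cor1}. Your explicit checks that these arrival times are positive and that the root lies in $[0,t]$ spell out what the paper leaves implicit. One small slip: with rows $\boldsymbol{p}_i-\boldsymbol{p}_1$, the formula should be $\tfrac12 A(t)^{-1}\boldsymbol{b}(t)$, not $A(t)^{-1}\boldsymbol{b}(t)$.
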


\begin{proof}
(i) From the equation of \(d_{\boldsymbol{x}_i}\) in \eqref{dx2}, we obtain
\begin{align*}
d_{\boldsymbol{x}_i}^2(t) 
&= \Bigl\langle \boldsymbol{x}_i\bigl(t + d_{\boldsymbol{x}_i}(t)/c\bigr) - \boldsymbol{y}(t), \, 
           \boldsymbol{x}_i\bigl(t + d_{\boldsymbol{x}_i}(t)/c\bigr) - \boldsymbol{y}(t) \Bigr\rangle \\
&= \Bigl\lvert  \boldsymbol{x}_i\bigl(t + d_{\boldsymbol{x}_i}(t)/c\bigr) \Bigr\rvert^2 + \lvert \boldsymbol{y}(t) \rvert^2 
   - 2 \Bigl\langle \boldsymbol{x}_i\bigl(t + d_{\boldsymbol{x}_i}(t)/c\bigr), \, \boldsymbol{y}(t) \Bigr\rangle,
\end{align*}
where \(\langle \cdot , \cdot \rangle\) denotes the inner product in \(\mathbb{R}^3\). It then follows that
\begin{align*}
d_{\boldsymbol{x}_1}^2(t) - d_{\boldsymbol{x}_{i+1}}^2(t) 
&= \Bigl\lvert \boldsymbol{x}_1\bigl(t + d_{\boldsymbol{x}_1}(t)/c\bigr) \Bigr\rvert^2 - \Bigl\lvert \boldsymbol{x}_{i+1}\bigl(t + d_{\boldsymbol{x}_{i+1}}(t)/c\bigr) \Bigr\rvert^2 \\
&\quad - 2 \Bigl\langle \boldsymbol{x}_1\bigl(t + d_{\boldsymbol{x}_1}(t)/c\bigr) - \boldsymbol{x}_{i+1}\bigl(t + d_{\boldsymbol{x}_{i+1}}(t)/c\bigr), \, \boldsymbol{y}(t) \Bigr\rangle, 
\quad i = 1,2,3.
\end{align*}
This can be written in the matrix form
\[
A(t) \, \boldsymbol{y}(t) = -b(t),
\]
where 
\begin{align*}
    A(t) &:= 
    \begin{bmatrix}
    \Bigl(\boldsymbol{x}_1\bigl(t + d_{\boldsymbol{x}_1}(t)/c\bigr) - \boldsymbol{x}_2\bigl(t + d_{\boldsymbol{x}_2}(t)/c\bigr)\Bigr)^T \\
    \Bigl(\boldsymbol{x}_1\bigl(t + d_{\boldsymbol{x}_1}(t)/c\bigr) - \boldsymbol{x}_3\bigl(t + d_{\boldsymbol{x}_3}(t)/c\bigr)\Bigr)^T \\
    \Bigl(\boldsymbol{x}_1\bigl(t + d_{\boldsymbol{x}_1}(t)/c\bigr) - \boldsymbol{x}_4\bigl(t + d_{\boldsymbol{x}_4}(t)/c\bigr)\Bigr)^T
    \end{bmatrix}\in \mathbb{R}^{3\times 3} , \\
    b(t) &:= \frac{1}{2}
    \begin{bmatrix}
    d_{\boldsymbol{x}_1}^2(t) - d_{\boldsymbol{x}_2}^2(t) 
      - \Bigl\lvert \boldsymbol{x}_1\bigl(t + d_{\boldsymbol{x}_1}(t)/c\bigr) \Bigr\rvert^2 
      + \Bigl\lvert \boldsymbol{x}_2\bigl(t + d_{\boldsymbol{x}_2}(t)/c\bigr) \Bigr\rvert^2 \\
    d_{\boldsymbol{x}_1}^2(t) - d_{\boldsymbol{x}_3}^2(t) 
      - \Bigl\lvert \boldsymbol{x}_1\bigl(t + d_{\boldsymbol{x}_1}(t)/c\bigr) \Bigr\rvert^2 
      + \Bigl\lvert \boldsymbol{x}_3\bigl(t + d_{\boldsymbol{x}_3}(t)/c\bigr) \Bigr\rvert^2 \\
    d_{\boldsymbol{x}_1}^2(t) - d_{\boldsymbol{x}_4}^2(t) 
      - \Bigl\lvert \boldsymbol{x}_1\bigl(t + d_{\boldsymbol{x}_1}(t)/c\bigr) \Bigr\rvert^2 
      + \Bigl\lvert \boldsymbol{x}_4\bigl(t + d_{\boldsymbol{x}_4}(t)/c\bigr) \Bigr\rvert^2
    \end{bmatrix}\in \mathbb{R}^{3\times 1}.
\end{align*}
Because the four receivers are not coplanar, the matrix \( A(t) \) is invertible for all \( t \geq 0 \). 
Therefore, we have
\[
\boldsymbol{y}(t) = - A^{-1}(t) b(t).
\]
Hence, we can define the above computation as a mapping \(\mathcal{G}\):
\[
\boldsymbol{y}(t) = \mathcal{G}[d_{\boldsymbol{x}_1}, d_{\boldsymbol{x}_2}, d_{\boldsymbol{x}_3}, d_{\boldsymbol{x}_4}](t) := -A^{-1}(t) b(t).
\]

(ii) From Corollary \ref{Cor1} and equation \eqref{dz&hz}, we know that, for any \( t \geq T_0 \), \( d_{\boldsymbol{z}}(t) \) is the unique solution to the equation
\[
d_{\boldsymbol{z}}(t) - \bigl\lvert \boldsymbol{z}\bigl(t - d_{\boldsymbol{z}}(t)/c\bigr) - \mathcal{G}[d_{\boldsymbol{x}_1}, d_{\boldsymbol{x}_2}, d_{\boldsymbol{x}_3}, d_{\boldsymbol{x}_4}](t) \bigr\rvert = 0.
\]
Therefore, there exists a nonlinear mapping \(\mathcal{F}\) such that
\[
d_{\boldsymbol{z}}(t) = \mathcal{F}[d_{\boldsymbol{x}_1}, d_{\boldsymbol{x}_2}, d_{\boldsymbol{x}_3}, d_{\boldsymbol{x}_4}](t) 
:= \bigl\{ \rho \ge 0 : \rho - \lvert \boldsymbol{z}(t-\rho/c) - \mathcal{G}[d_{\boldsymbol{x}_1}, d_{\boldsymbol{x}_2}, d_{\boldsymbol{x}_3}, d_{\boldsymbol{x}_4}](t) \rvert = 0 \bigr\}.
\]
\end{proof}

Combining the conclusions of Theorems \ref{Theo2} and \ref{Theo3}, we know that \(d_{\mathbf{x}_i}\) satisfies the equation 
\begin{equation}\label{algref3-1}
d_{\mathbf{x}_i}'(t) = f_i(t, d_{\mathbf{x}_1}, d_{\mathbf{x}_2}, d_{\mathbf{x}_3}, d_{\mathbf{x}_4}) , \quad t \geq T_0
\end{equation}
where the function \(f_i\) for \(i=1,2,3,4\) is defined by
\begin{equation*}
    f_i(t, d_{\mathbf{x}_1}, d_{\mathbf{x}_2}, d_{\mathbf{x}_3}, d_{\mathbf{x}_4}) := 
    \frac{ 
        \dfrac{c\,\varphi\bigl(t - d_{\boldsymbol{z}}(t)/c\bigr)}
              {4\pi\,d_{\boldsymbol{z}}(t)} 
        + \bigl(c - 4\pi c\alpha \, d_{\mathbf{x}_i}(t)\bigr) U_{\mathbf{x}_i}\bigl(\frac{d_{\mathbf{x}_i}(t)}{c} + t\bigr)
    }{ 
        U_{\mathbf{x}_i}\bigl(\frac{d_{\mathbf{x}_i}(t)}{c} + t\bigr) 
        + \frac{d_{\mathbf{x}_i}(t)}{c} U_{\mathbf{x}_i}'\bigl(\frac{d_{\mathbf{x}_i}(t)}{c} + t\bigr) 
    } - c, 
\end{equation*}
with \(d_{\boldsymbol{z}}(t) = \mathcal{F}[d_{\mathbf{x}_1}, d_{\mathbf{x}_2}, d_{\mathbf{x}_3}, d_{\mathbf{x}_4}](t)\).

Define the vector-valued function
\[
\mathbf{d}(t) := 
        \begin{bmatrix}
        d_{\mathbf{x}_1}(t) \\
        d_{\mathbf{x}_2}(t) \\
        d_{\mathbf{x}_3}(t) \\
        d_{\mathbf{x}_4}(t)
        \end{bmatrix}
\]
and set
\[
\mathbf{F}\bigl(t, \mathbf{d}(t)\bigr) := 
        \begin{bmatrix}
        f_1\bigl(t, d_{\mathbf{x}_1}, d_{\mathbf{x}_2}, d_{\mathbf{x}_3}, d_{\mathbf{x}_4}\bigr) \\
        f_2\bigl(t, d_{\mathbf{x}_1}, d_{\mathbf{x}_2}, d_{\mathbf{x}_3}, d_{\mathbf{x}_4}\bigr) \\
        f_3\bigl(t, d_{\mathbf{x}_1}, d_{\mathbf{x}_2}, d_{\mathbf{x}_3}, d_{\mathbf{x}_4}\bigr) \\
        f_4\bigl(t, d_{\mathbf{x}_1}, d_{\mathbf{x}_2}, d_{\mathbf{x}_3}, d_{\mathbf{x}_4}\bigr)
        \end{bmatrix}.
\]
Thus, the following equation holds:
\begin{equation}\label{Ode}
    \mathbf{d}'(t) = \mathbf{F} \bigl(t, \mathbf{d}(t)\bigr), \quad t \geq T_0.
\end{equation}

Based on the above conclusions, we design an inversion algorithm \ref{alg:inversion} for recovering the moving trajectory of the scatterer.

\begin{algorithm}[!ht]
\caption{Inversion Algorithm for the Trajectory of a Moving Point Scatterer}
\label{alg:inversion}

\textbf{Input:} Trajectories of four receivers \( \boldsymbol{x}_1(t), \boldsymbol{x}_2(t), \boldsymbol{x}_3(t), \boldsymbol{x}_4(t) \) and the point emitter \(\boldsymbol{z}(t)\); Recorded wave signals  \( U_{\boldsymbol{x}_1}(t), U_{\boldsymbol{x}_2}(t), U_{\boldsymbol{x}_3}(t), U_{\boldsymbol{x}_4}(t) \) and their derivatives.

\textbf{Output:} The moment \(T_0\) and the trajectory \( \boldsymbol{y}(t) \) of the moving scatterer for \( t \geq T_0 \).

\textbf{Step 1:} Compute the arrival times \(T_{2,\mathbf{x}_i}\). Given the four moving observation points \(\mathbf{x}_i, i = 1, 2, 3, 4\), compute  
    \[
    T_{2,\mathbf{x}_i} := \inf_{t > 0} \bigl\{ t : U_{\mathbf{x}_i}(t) \neq 0 \bigr\}.
    \]
    
\textbf{Step 2:} Compute the starting position \(\mathbf{y}(T_0)\) and initial time \(T_0\) of the scatterer's trajectory. Solve the optimization problem 
    \[\mathbf{y}_0 = \arg \min_{\mathbf{p} \in \mathbb{R}^3} \sum_{i=1,2,3,4} \bigl( \bigl|\mathbf{x}_i(T_{2,\mathbf{x}_i}) - \mathbf{p}\bigr| + \bigl|\mathbf{p} - \mathbf{z}(0)\bigr| - 
    cT_{2,\mathbf{x}_i} \bigr)^2.\] Then set  
    \[
    T_0 = |\mathbf{y}_0 - \mathbf{z}(0)|/c
    \quad \text{and} \quad
    \mathbf{y}(T_0) = \mathbf{y}_0.
    \]

\textbf{Step 3:} Compute the initial value \(\mathbf{d}(T_0)\) of the vector-valued distance function. That is, compute
\[
        \mathbf{d}_0 = 
        \begin{bmatrix}
        \bigl|\mathbf{x}_1(T_{2,\mathbf{x}_1}) - \mathbf{y}(T_0)\bigr| \\
        \bigl|\mathbf{x}_2(T_{2,\mathbf{x}_2}) - \mathbf{y}(T_0)\bigr| \\
        \bigl|\mathbf{x}_3(T_{2,\mathbf{x}_3}) - \mathbf{y}(T_0)\bigr| \\
        \bigl|\mathbf{x}_4(T_{2,\mathbf{x}_4}) - \mathbf{y}(T_0)\bigr|
        \end{bmatrix}.
\]
Then set  
    \[
    \mathbf{d}(T_0) = \mathbf{d}_0.
    \]

\textbf{Step 4:} Compute the vector-valued distance function \(\mathbf{d}(t)\) by solving the system of ordinary differential equations
        \[
        \begin{cases}
        \mathbf{d}'(t) = \mathbf{F}\bigl(t, \mathbf{d}(t)\bigr), & t \geq T_0, \\
        \mathbf{d}(T_0) = \mathbf{d}_0,
        \end{cases}
        \]

\textbf{Step 5:} Compute the trajectory \(\boldsymbol{y}\) of the scatterer. Compute
\[
\boldsymbol{y}(t) = \mathcal{G}[d_{\boldsymbol{x}_1}, d_{\boldsymbol{x}_2}, d_{\boldsymbol{x}_3}, d_{\boldsymbol{x}_4}](t) \quad \text{for all } t \geq T_0,
\]
    
\textbf{Return:} \( T_0 \) and \(\{\boldsymbol{y}(t),t\geq T_0\} \).
\end{algorithm}

The first step of the algorithm is derived from equation \eqref{T2x}; the second step follows from the definition of \(T_0\) and Corollary \ref{Cor2}; the third step is based on the definition of \(\mathbf{d}_0\); the fourth step follows from equation \eqref{Ode}; and the final step is based on the results of Theorem \ref{Theo3}.

\section{Numerical Examples}

In this section, we present four numerical experiments to demonstrate the effectiveness and robustness of the proposed reconstruction method. The scattering parameter $\alpha$ and the wave speed $c$ vary between experiments to reflect different physical regimes. Unless otherwise noted, we introduce multiplicative uniform noise into the measurements $U_{\mathbf{x}_i}(t)$ and their derivatives $U_{\mathbf{x}_i}'(t)$ as follows:
\[
\begin{aligned}
\tilde{U}_{\mathbf{x}_i}(t) &= U_{\mathbf{x}_i}(t) \times \bigl( 1 + \varepsilon \cdot \eta_i(t) \bigr), \\
\tilde{U}_{\mathbf{x}_i}'(t) &= U_{\mathbf{x}_i}'(t) \times \bigl( 1 + \varepsilon \cdot \eta_i(t) \bigr),
\end{aligned}
\]
where $\varepsilon > 0$ represents the noise level and $\eta_i(t) \sim \mathcal{U}(-1, 1)$ is independently drawn from the uniform distribution on $[-1,1]$.

The optimization problem in Step~2 of Algorithm~\ref{alg:inversion} is solved using the least squares function of the SciPy module. The system of ordinary differential equations in Step~4 is integrated using an adaptive-step Runge--Kutta (RK45) method for the first and third examples, and the LSODA solver for the second and fourth examples, both with relative tolerance $\mathrm{rtol} = 10^{-4}$ and absolute tolerance $\mathrm{atol} = 10^{-6}$. Inverse functions, required for the evaluation of the operator $\mathcal{F}$, are computed via the bisection method. The reconstruction accuracy is measured by the maximum absolute error over the inversion time interval $I$:
\[
\mathrm{Err} := \sup_{t \in I} \|\mathbf{y}(t) - \hat{\mathbf{y}}(t)\|_{\infty},
\]
where $\hat{\mathbf{y}}(t)$ denotes the reconstructed trajectory, $\mathbf{y}(t)$ is the true trajectory, and $\|\cdot\|_{\infty}$ is the maximum norm in $\mathbb{R}^3$.

\FloatBarrier
\subsection{Linear Trajectory along the $x$-axis}

In the first example, the wave speed is $c = 340$ and the scattering parameter is set to be $\alpha = 0.02$. The point-like scatterer moves at a constant velocity along the $x$-axis, described by
\[
\mathbf{y}(t) =
\begin{pmatrix}
10\,t \\
0 \\
0
\end{pmatrix}.
\]
The emitter is held stationary at
\[
\mathbf{z}(t) =
\begin{pmatrix}
1500 \\
0 \\
0
\end{pmatrix},
\]
and its strength function is given by a linear ramp,
\[
\varphi(t) = 1 + 0.16\,t.
\]
Four stationary receivers are placed at the positions
\[
\mathbf{x}_1 = (1500, 0, 0)^{\top},\quad
\mathbf{x}_2 = (-1500, 0, 0)^{\top},\quad
\mathbf{x}_3 = (0, -1500, 0)^{\top},\quad
\mathbf{x}_4 = (0, 0, 1500)^{\top}.
\]
The inversion interval is $I = [\hat{T}_0, \hat{T}_0 + 5]$. This setup, with a stationary emitter and stationary receivers and a simple linear scatterer trajectory, provides a baseline test for the algorithm's core computational framework.

The reconstruction errors under increasing noise levels are reported in Table~\ref{tab:case1_error} and the reconstructed $x$-coordinate trajectories are plotted against the true trajectory in Figure~\ref{fig:case1}. The error grows approximately linearly with the noise level, from $0.020$ in the noise-free case to $121.37$ at $\varepsilon = 0.12$, indicating that the method degrades gracefully under measurement perturbations.

\begin{table}[!ht]
    \centering
    \caption{Maximum absolute errors for the linear trajectory at different noise levels.}
    \label{tab:case1_error}
    \begin{tabular}{ccccccc}
        \hline
        $\varepsilon$ & $0$ & $0.01$ & $0.02$ & $0.04$ & $0.08$ & $0.12$ \\
        \hline
        Err & $0.020$ & $9.740$ & $19.816$ & $39.822$ & $80.213$ & $121.373$ \\
        \hline
    \end{tabular}
\end{table}

\begin{figure}[!ht]
\centering
\includegraphics[width=1\textwidth]{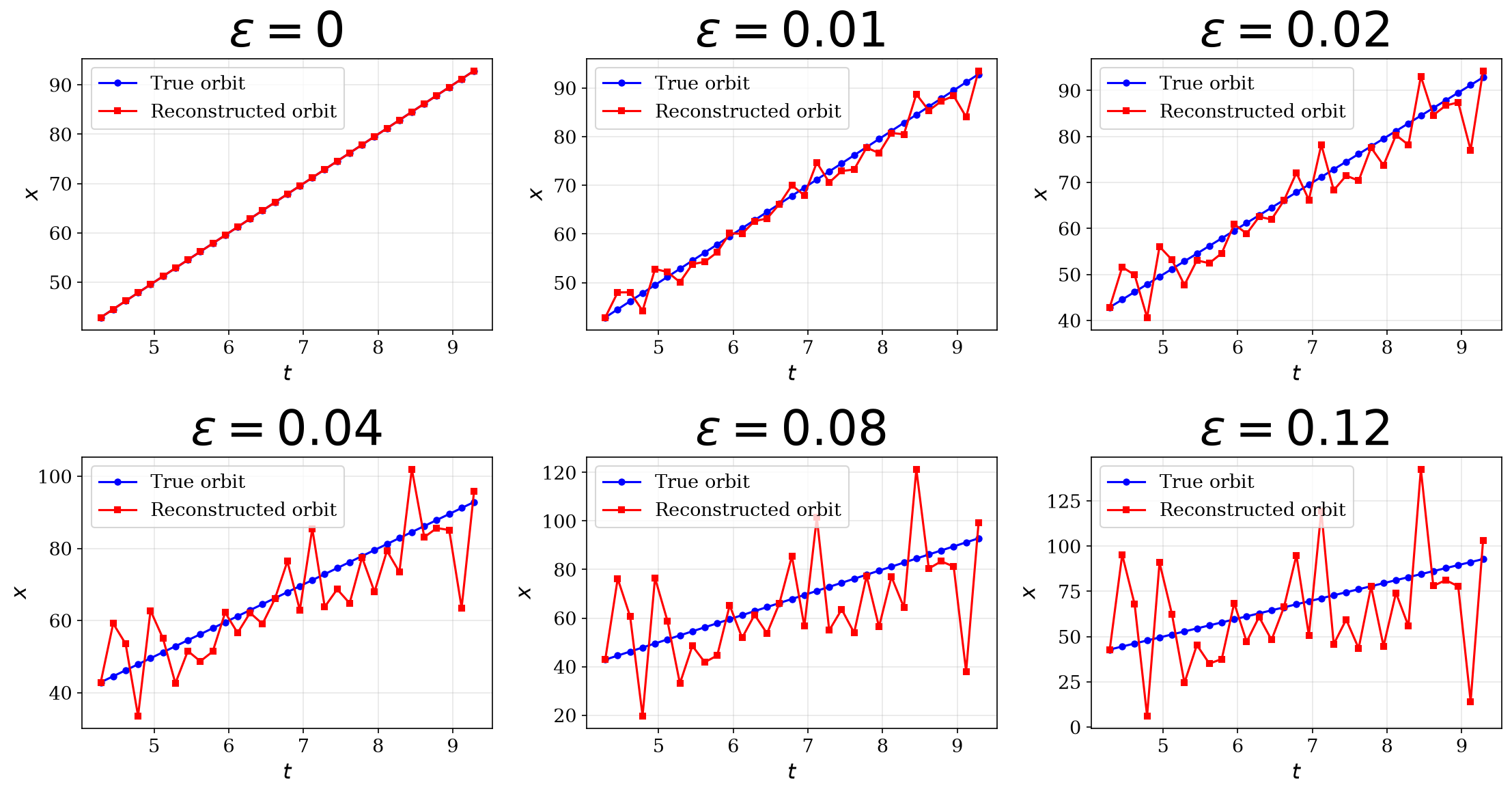}
\caption{Reconstruction of a linearly moving scatterer. Blue curves represent the true trajectory and red curves represent the reconstructed trajectory. Results are shown under noise levels $\varepsilon = 0, 0.01, 0.02, 0.04, 0.08, 0.12$.}
\label{fig:case1}
\end{figure}

\FloatBarrier
\subsection{Nonlinear Planar Trajectory}

In the second example, the wave speed is $c = 340$ and the scattering parameter is $\alpha = 0.002$. The scatterer follows a nonlinear path in the $xy$-plane, given by
\[
\mathbf{y}(t) =
\begin{pmatrix}
10\,t \\
50\sin(t) \\
0
\end{pmatrix}.
\]
The emitter is fixed at $x = 1500$ but oscillates slightly in the $z$-direction:
\[
\mathbf{z}(t) =
\begin{pmatrix}
1500 \\
0 \\
\cos(t)
\end{pmatrix}.
\]
The emitter strength follows a Gaussian-modulated pulse centered at $t = 5$:
\[
\varphi(t) = 1 + 0.6\,\exp\!\left(-\frac{(t - 5)^2}{2 \cdot 4^2}\right).
\]
The four receivers remain stationary at
\[
\mathbf{x}_1 = (1500, 0, 0)^{\top},\quad
\mathbf{x}_2 = (-1500, 0, 0)^{\top},\quad
\mathbf{x}_3 = (0, -1500, 0)^{\top},\quad
\mathbf{x}_4 = (0, 0, 1500)^{\top}.
\]
The inversion interval is $I = [\hat{T}_0, \hat{T}_0 + 10]$. Compared to the first example, this example introduces a sinusoidal oscillation in the scatterer trajectory combined with a time-localized emitter pulse, testing the algorithm's ability to resolve nonlinear motion with temporally varying illumination.

The reconstruction errors are summarized in Table~\ref{tab:case2_error}, and the reconstructed orbits in the $xy$-plane are compared with the true orbit in Figure~\ref{fig:case2}. In the noise-free case, the reconstruction closely follows the true trajectory, with a maximum absolute error of $0.695$. At $\varepsilon=0.01$, the main oscillatory structure remains accurately resolved and the error is $10.144$. As the noise level increases, the error grows to $26.393$ at $\varepsilon=0.04$ and $64.791$ at $\varepsilon=0.10$. The reconstruction retains the principal planar oscillation at low-to-moderate noise levels, whereas pronounced local deviations appear at the two highest noise levels.

\begin{table}[!ht]
    \centering
    \caption{Maximum absolute errors for the nonlinear planar trajectory at different noise levels.}
    \label{tab:case2_error}
    \begin{tabular}{ccccccc}
        \hline
        $\varepsilon$ & $0$ & $0.01$ & $0.02$ & $0.04$ & $0.08$ & $0.10$ \\
        \hline
        Err & $0.695$ & $10.144$ & $13.551$ & $26.393$ & $50.346$ & $64.791$ \\
        \hline
    \end{tabular}
\end{table}

\begin{figure}[!ht]
\centering
\includegraphics[width=1\textwidth]{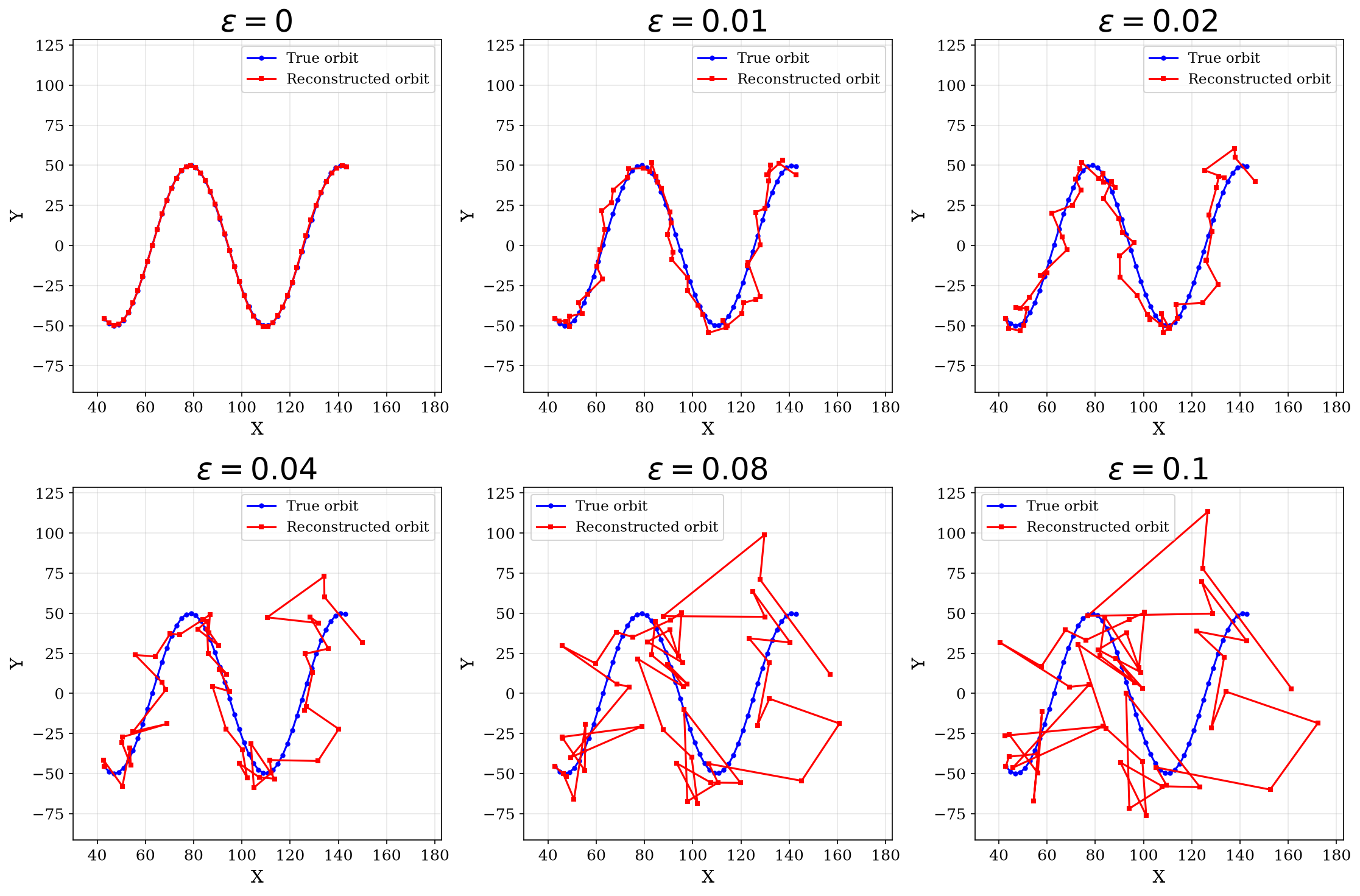}
\caption{Reconstruction of a nonlinearly moving scatterer in the $xy$-plane. Blue curves represent the true orbit and red curves represent the reconstructed orbit. Results are shown for noise levels $\varepsilon = 0, 0.01, 0.02, 0.04, 0.08, 0.10$.}
\label{fig:case2}
\end{figure}

\FloatBarrier
\subsection{Fully Three-Dimensional Helical Trajectory}

In the third example, the wave speed is $c = 340$ and the scattering parameter is $\alpha = 0.001$. The scatterer executes a helical motion in $\mathbb{R}^3$,
\[
\mathbf{y}(t) =
\begin{pmatrix}
10\,t \\
80\cos(1.5\,t) \\
80\sin(1.5\,t)
\end{pmatrix},
\]
while the emitter moves along an oscillatory path,
\[
\mathbf{z}(t) =
\begin{pmatrix}
1500 + 20\cos(0.5\,t) \\
0 \\
30\sin(0.6\,t)
\end{pmatrix},
\]
with a mildly oscillating strength,
\[
\varphi(t) = 1 + 0.01\sin(t).
\]
All four receivers are also in motion:
\[
\mathbf{x}_1(t) =
\begin{pmatrix}
1500 + 20\cos(0.8\,t) \\
0 \\
0
\end{pmatrix},\quad
\mathbf{x}_2(t) =
\begin{pmatrix}
0 \\
1500 + 20\sin(0.7\,t) \\
0
\end{pmatrix},
\]
\[
\mathbf{x}_3(t) =
\begin{pmatrix}
0 \\
-1500 - 20\cos(0.6\,t) \\
0
\end{pmatrix},\quad
\mathbf{x}_4(t) =
\begin{pmatrix}
0 \\
0 \\
1500 + 20\cos(0.9\,t)
\end{pmatrix}.
\]
The inversion interval is $I = [\hat{T}_0, \hat{T}_0 + 10]$. This example features simultaneous motion of the scatterer, emitter, and all receivers, creating a fully dynamic observation system. The helical trajectory with a radius of $80$ involves coupled oscillations in the $y$ and $z$ directions and tests the algorithm's capacity under realistic multi-component motion.

The reconstruction errors are listed in Table~\ref{tab:case3_error} and the three-dimensional reconstructed orbits are shown together with the true orbits in Figure~\ref{fig:case3}. At $\varepsilon = 0$ the error is $0.172$, rising to $35.458$ at $\varepsilon = 0.1$. Despite the substantial degradation at the highest noise levels, the overall helical structure remains discernible in all reconstructions.

\begin{table}[!ht]
    \centering
    \caption{Maximum absolute errors for the helical trajectory at different noise levels.}
    \label{tab:case3_error}
    \begin{tabular}{ccccccc}
        \hline
        $\varepsilon$ & $0$ & $0.01$ & $0.04$ & $0.06$ & $0.08$ & $0.1$ \\
        \hline
        Err & $0.172$ & $5.235$ & $17.697$ & $23.934$ & $31.277$ & $35.458$ \\
        \hline
    \end{tabular}
\end{table}

\begin{figure}[!ht]
\centering
\includegraphics[width=1\textwidth]{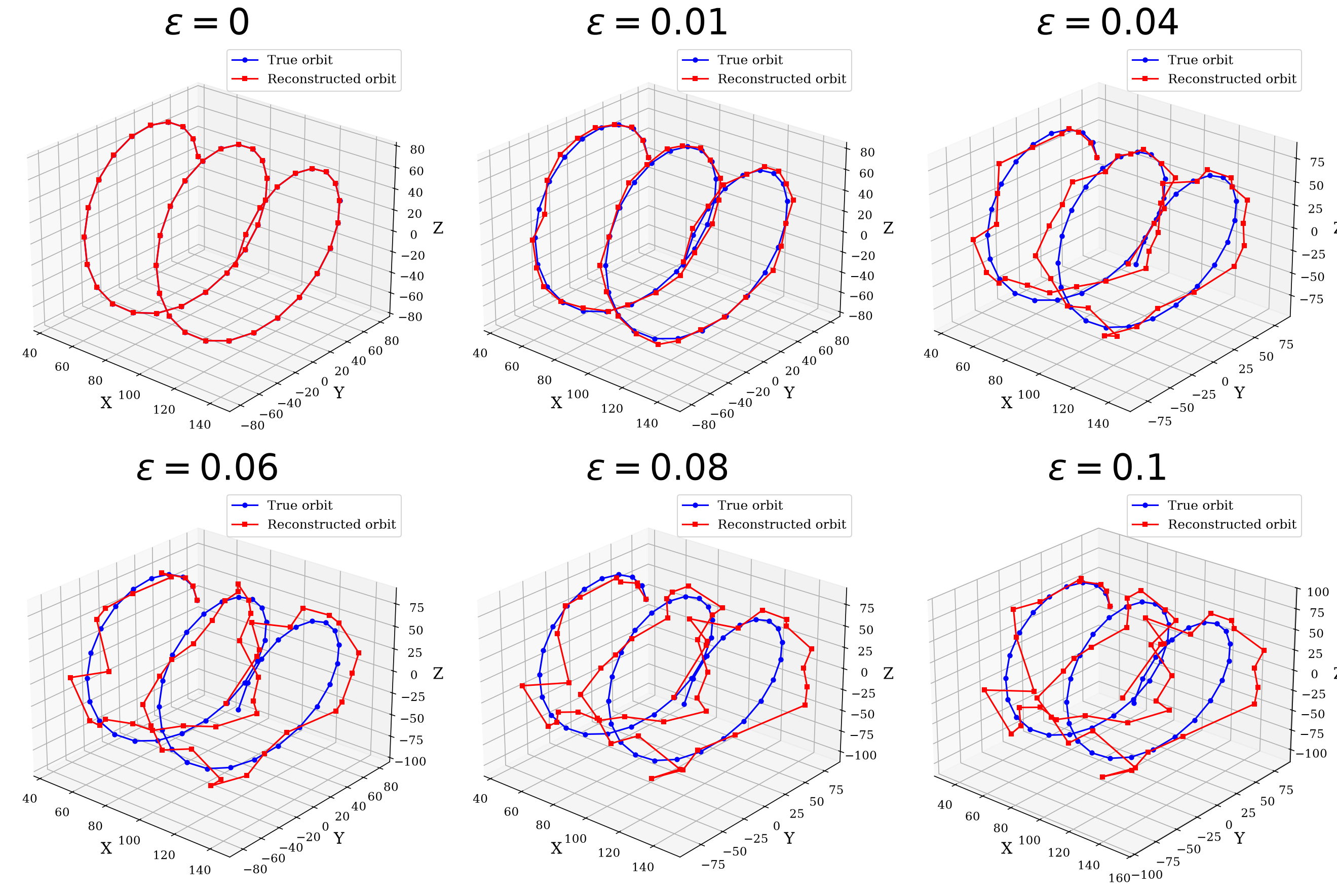}
\caption{Reconstruction of a helically moving scatterer with moving emitter and receivers. Blue curves represent the true orbit and red curves represent the reconstructed orbit. Results are shown under noise levels $\varepsilon = 0, 0.01, 0.04, 0.06, 0.08, 0.1$.}
\label{fig:case3}
\end{figure}

\FloatBarrier
\subsection{Effect of the Wave Speed on Reconstruction Accuracy}

In the final example, we investigate how the reconstruction accuracy depends on the wave speed $c$. The scatterer trajectory is a fully three-dimensional curve given by
\[
\mathbf{y}(t) =
\begin{pmatrix}
30\,t \\
8\sin(0.8\,t) \\
5\cos(0.7\,t)
\end{pmatrix},
\]
whose maximal speed is $\max_{t \ge 0} \|\dot{\mathbf{y}}(t)\| \approx 30.9$. The emitter is located near $(1500, 0, 0)$ with a small $z$-oscillation,
\[
\mathbf{z}(t) =
\begin{pmatrix}
1500 \\
0 \\
2\cos(0.5\,t)
\end{pmatrix},
\]
and its strength is always positive,
\[
\varphi(t) = 1.3 + 0.2\sin(0.5\,t) + 0.1\cos(0.8\,t) \in [1, 1.6],
\]
thereby testing the algorithm with a source of opposite sign. The four receivers are stationary at the same positions as in the first two examples,
\[
\mathbf{x}_1 = (1500, 0, 0)^{\top},\;
\mathbf{x}_2 = (-1500, 0, 0)^{\top},\;
\mathbf{x}_3 = (0, -1500, 0)^{\top},\;
\mathbf{x}_4 = (0, 0, 1500)^{\top}.
\]
The scattering parameter is $\alpha = 0.002$, the noise level is fixed at $\varepsilon = 0$ to isolate the influence of $c$, and the inversion interval is $I = [\hat{T}_0, \hat{T}_0 + 10]$. The wave speed $c$ is varied over $\{80, 180, 250, 340, 400, 650\}$. The smallest value $c = 80$ exceeds the maximal scatterer speed by a factor of approximately $2.6$, still satisfying the subsonic condition.

The reconstruction errors are reported in Table~\ref{tab:case4_error} and the trajectories are visualized in Figure~\ref{fig:case4}. At $c = 80$, the error is large ($408.54$), reflecting the fact that the scatterer speed is a substantial fraction of the wave speed and the retarded-time equations are poorly conditioned. As $c$ increases, the error drops dramatically: at $c = 180$ it is $76.45$, and by $c = 250$ it falls below $1$. For $c = 340$ and above the error stabilizes around $0.2$--$0.4$. This behavior is consistent with the theoretical subsonic requirement: when $\|\dot{\mathbf{y}}\|/c$ is too close to unity, the inversion problem becomes severely ill-conditioned, while a sufficiently large margin between the scatterer speed and the wave speed yields a stable and accurate reconstruction.

\begin{table}[!ht]
    \centering
    \caption{Maximum absolute errors for different wave speeds $c$ ($\varepsilon = 0$).}
    \label{tab:case4_error}
    \begin{tabular}{ccccccc}
        \hline
        $c$ & $80$ & $180$ & $250$ & $340$ & $400$ & $650$ \\
        \hline
        Err & $408.54$ & $76.45$ & $0.795$ & $0.408$ & $0.263$ & $0.239$ \\
        \hline
    \end{tabular}
\end{table}

\begin{figure}[!ht]
\centering
\includegraphics[width=1\textwidth]{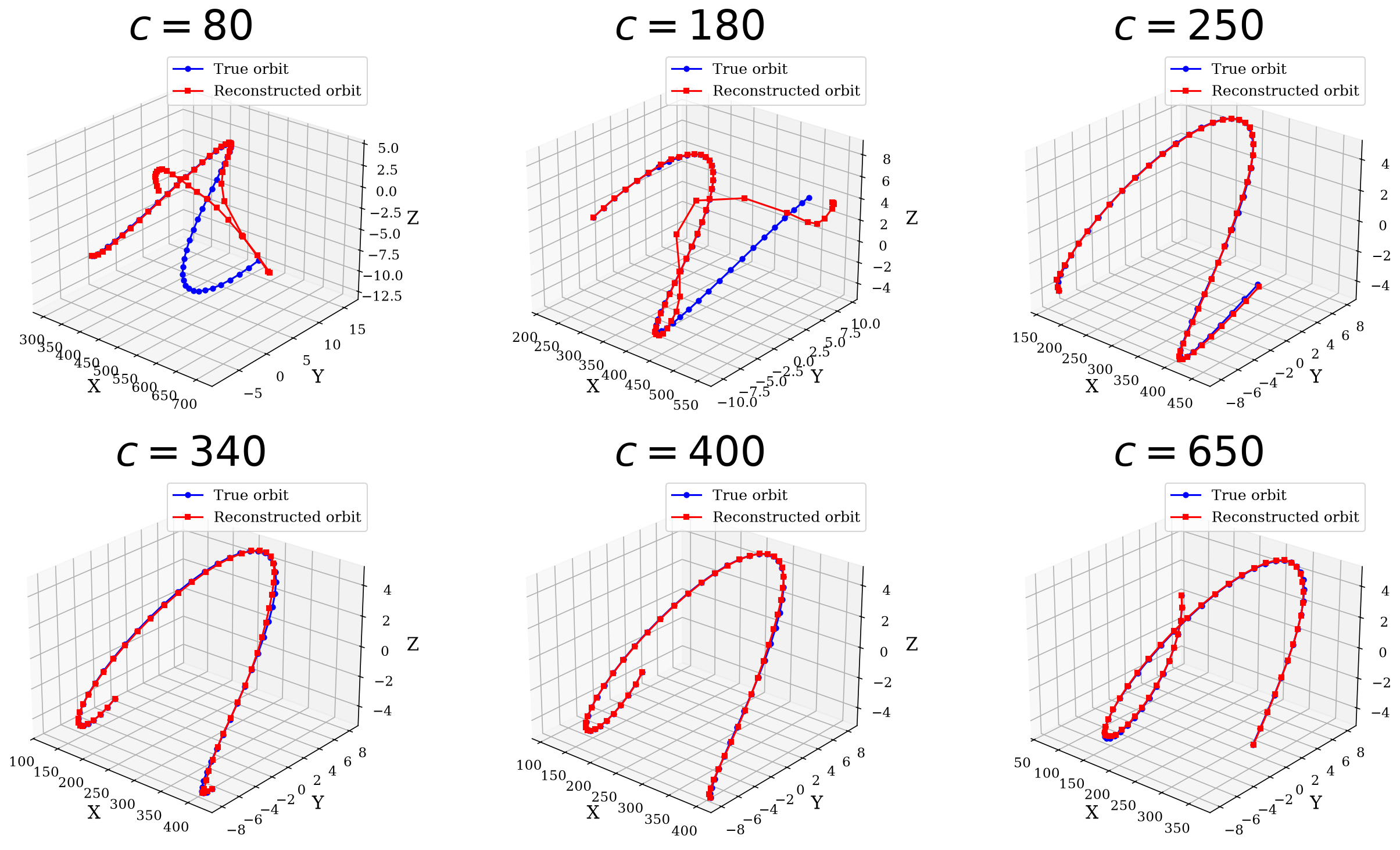}
\caption{Reconstruction of the scatterer trajectory for different wave speeds $c = 80, 180, 250, 340, 400, 650$ ($\varepsilon = 0$). Blue curves represent the true orbit and red curves represent the reconstructed orbit.}
\label{fig:case4}
\end{figure}

Across the four numerical examples, the proposed inversion algorithm demonstrates consistent and predictable behavior. Under noise-free conditions, the reconstruction error is on the order of $10^{-2}$ to $10^{-1}$, limited primarily by the discretization of the ODE solver. As noise is introduced, the error increases monotonically but without catastrophic failure, indicating that the coupled ODE system~\eqref{Ode} is well-conditioned with respect to perturbations in the measurement data. The wave-speed study confirms that the method performs reliably as long as the subsonic margin $c - \|\dot{\mathbf{y}}\|_{\infty}$ is not too narrow.

\section{Conclusion}
In this work, we have investigated an inverse scattering problem for the scalar wave equation in a fully dynamic configuration where the point emitter, a point-like scatterer, and multiple receivers are all moving. On the forward side, we have established a rigorous point-interaction model for the moving scatterer in the time domain, proving well-posedness of the scattering problem, including existence, uniqueness, and continuous dependence of the scattered field on the scatterer trajectory and scattering parameter. This provides a solid mathematical foundation for the subsequent inverse analysis. For the inverse problem, by reformulating the problem in terms of distance functions, we reduce the reconstruction to a system of nonlinear ordinary differential equations. Theoretical analysis shows that four non-coplanar receivers are sufficient for unique recovery. Numerical experiments demonstrate the effectiveness and robustness of the method under various dynamic configurations. Future work includes stability analysis with respect to measurement errors and extensions to more general scattering models, such as multiple point scatterers or extended obstacles.

\end{document}